\documentclass[11pt]{article}

\usepackage[a4paper,margin=1.15in]{geometry}
\usepackage{amsmath,amssymb,amsthm,mathtools}
\usepackage{microtype}
\usepackage[hidelinks]{hyperref}

\newtheorem{theorem}{Theorem}[section]
\newtheorem{proposition}[theorem]{Proposition}
\newtheorem{lemma}[theorem]{Lemma}

\theoremstyle{remark}

\newcommand{\R}{\mathbb R}
\newcommand{\C}{\mathbb C}
\newcommand{\E}{\mathbb E}
\newcommand{\dd}{\,\mathrm d}

\newcommand{\dist}{\operatorname{dist}}
\newcommand{\rf}[2]{\left(#1\right)_{#2}}

\title{A new hyperbolicity wedge and a joint semicircle limit for Jensen polynomials of Riemann's \texorpdfstring{$\xi$}{xi}-function}
\author{Jonathan Holland}
\date{3 August 2026}

\begin{document}

\maketitle

\begin{abstract}
Let
\[
 \xi\!\left(\frac12+z\right)
   =\sum_{n\geq 0}\frac{\gamma(n)}{n!}z^{2n},
 \qquad
 J^{d,n}(X)
   =\sum_{j=0}^{d}\binom dj\gamma(n+j)X^j .
\]
The Riemann hypothesis is equivalent to the hyperbolicity of
$J^{d,n}$ for every $d,n\geq0$.  We prove that there is an absolute
constant $K>0$ such that
\[
 n^3\log^2(n+2)\geq Kd^5
 \quad\Longrightarrow\quad
 J^{d,n}\ \text{is hyperbolic}.
\]
Along every sequence with $n,d\to\infty$ in this region, the
empirical measure of the naturally centered and scaled zeros also
converges to Wigner's semicircle law.  This gives a simultaneous
degree--derivative version of the global semicircle consequence of
the fixed-degree Hermite limit of Griffin, Ono, Rolen, and Zagier.
\end{abstract}

\section{Introduction and statement of the result}

Riemann's completed zeta-function is
\[
 \xi(s)
 =\frac12s(s-1)\pi^{-s/2}\Gamma\!\left(\frac{s}{2}\right)\zeta(s).
\]
Its functional equation implies that
\[
 \xi_{\mathrm c}(z):=\xi\!\left(\frac12+z\right)
 =\sum_{n=0}^{\infty}\frac{\gamma(n)}{n!}z^{2n}
\]
is an even real entire function.  The coefficients satisfy
$\gamma(n)>0$.  For integers $d,n\geq0$, define
\[
 J^{d,n}(X)
 :=\sum_{j=0}^{d}\binom dj\gamma(n+j)X^j.
\]
A real polynomial is called \emph{hyperbolic} if all its zeros are
real.  P\'olya's Jensen criterion \cite{Polya} says that the Riemann
hypothesis is equivalent to the hyperbolicity of $J^{d,n}$ for all
$d,n$.

Our main result is the following unconditional region in the
$(d,n)$-plane.

\begin{theorem}[Main theorem]\label{thm:main}
There is an absolute constant $K>0$ such that, for all integers
$d\geq1$ and $n\geq0$ satisfying
\[
 n^3\log^2(n+2)\geq Kd^5,
\]
the Jensen polynomial $J^{d,n}$ has $d$ distinct negative real
zeros.
\end{theorem}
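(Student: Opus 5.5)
We would follow the Griffin--Ono--Rolen--Zagier (GORZ) strategy, but keep every error term explicit and uniform in $d$. The aim is to compare the renormalized Jensen polynomial with the Hermite polynomial $H_d$ quantitatively, and then show that the perturbation cannot move zeros off the real line.

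\textbf{Step 1: coefficient asymptotics.} Write $\gamma(n+j)=\gamma(n)\exp\bigl(A(n)j-\delta(n)^2j^2+\sum_{k\ge3}c_k(n)j^k\bigr)$, with the expansion valid uniformly for $0\le j\le d$. Using the saddle-point asymptotics for $\gamma(n)$ (as in GORZ, via the integral representation of $\xi$ through $\Phi(t)=\sum(2\pi^2m^4e^{9t}-3\pi m^2e^{5t})e^{-\pi m^2e^{4t}}$), we expect $\delta(n)^2\asymp 1/(n\log^{2}n)$ up to the precise logarithmic power, and $c_k(n)\ll_k n^{1-k}(\log n)^{-(k-1)}$, with explicit control of the whole tail. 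After the substitution $X=(\delta Y-1)/\exp(A)$, this gives
\[
\widehat J^{d,n}(Y)=\sum_{j}\binom dj (-1)^{d-j}\delta^{j}Y^j\ \text{-type sum}\ \cdots
\]
This is the GORZ normalization, written as $\sum_j\binom dj e^{-\delta^2 j^2+\varepsilon_j}(\ldots)$ with $|\varepsilon_j|\ll d^3 n^{-2}(\log n)^{-2}$.

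\textbf{Step 2: reduction to a quantitative Hermite comparison.} For $\varepsilon_j=0$ the sum is exactly a Gaussian-twisted binomial. Its zeros are those of a Jensen polynomial of $e^{-\delta^2j^2}$, which is the Jensen polynomial of an entire function in the Laguerre--P\'olya class, since $e^{-\delta^2 j^2}=\E[e^{ij\sqrt2\delta G}]$. These zeros are real and simple, and after rescaling by $\sqrt d\,\delta$ they are close to the zeros of $H_d$. The task is then to show that the cubic and higher perturbations $\varepsilon_j$ preserve hyperbolicity. We would measure the perturbation in a weighted norm adapted to the model polynomial $P_0$. Concretely, at each model zero $x_k$ we need $|P_\varepsilon-P_0|<|P_0|$ on small circles around $x_k$ of radius comparable to half the gap $x_{k+1}-x_k$. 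Rouch\'e's theorem then traps one zero in each disc, and conjugate symmetry forces each of these zeros to be real; this yields $d$ distinct real zeros. Negativity is automatic, since all coefficients are positive.

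\textbf{Step 3: scale bookkeeping (the main obstacle).} The main obstacle is proving the Rouch\'e inequality uniformly as $d\to\infty$. Hermite zero gaps shrink like $d^{-1/2}$ in the bulk, and $|P_0|$ on the circles is exponentially smaller than $\sup|P_0|$. A crude coefficientwise bound on $\varepsilon_j$ therefore loses far too much. We would first try representing both polynomials as integrals,
\[
P(Y)=\E\bigl[(1+Ye^{\sqrt2\delta G+\dots})^d\bigr],
\]
or equivalently writing the perturbation as a differential operator $\exp(\sum_k c_k\theta^k)$ with $\theta=Y\partial_Y$, applied to $P_0$. The operator $\theta^k$ acting on a degree-$d$ polynomial costs a factor about $d^{k}$ in the relevant local norm, relative to the gap scale $\sqrt d$. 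The cubic term then contributes about $c_3 d^3/(\text{local scale})$. Comparing this with the gap yields a condition of the form $d^{5/2}\ll n^{3/2}\log n$, that is, exactly $n^3\log^2 n\gg d^5$; matching this exponent is the check that the bookkeeping is right. Higher $c_k$ are handled by the same estimate with geometric decay once $d\ll n$, which holds in the region. To obtain the lower bound on $|P_0|$ on the circles, we would use Plancherel--Rotach-type asymptotics for Hermite polynomials uniformly in the bulk, with a separate Airy-edge treatment near the extreme zeros. Alternatively, we would use an interlacing argument: show that $P_\varepsilon$ alternates in sign at the $d+1$ midpoints between consecutive model zeros, which requires only pointwise real estimates. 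We would try the alternation approach first, since it avoids complex lower bounds. Small $n$, and hence small $d$, are covered by choosing $K$ large.
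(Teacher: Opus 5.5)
\textbf{There is a genuine gap: a Hermite (Gaussian) comparison cannot reach this wedge.} The step meant to show that it can rests on incorrect coefficient asymptotics.

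By \eqref{eq:gamma-h}, $\log\gamma(n+z)=\mathrm{const}-z\log4-\log\Gamma(n+\tfrac12+z)+h(n+z)$ with $h=\log M$. For $k\geq2$ one has $h^{(k)}(n)\asymp n^{1-k}/\log n$, while the $\log\Gamma$ term contributes $\asymp n^{1-k}$. So the logarithm enters only as a lower-order correction. In fact $\delta(n)^2\sim1/(2n)$, and the cubic coefficient is $c_3(n)\sim1/(6n^2)$, with no logarithmic saving.

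Now run your own alternation test with the correct sizes. Your Gaussian model matches only $R_0,R_1,R_2$. So in the Newton expansion $P=\sum_k\Delta^kc(0)\,y^kp_0^{(k)}(y)/k!$ the first surviving term has $\Delta^3c(0)\asymp n^{-2}$. Take the variable $y=-nR_1X$, in which the model zeros sit near $n$ with spread $\asymp\sqrt{nd}$. For the Hermite equation $H_d''=xH_d'-dH_d$ one has $H_d'''/H_d=-dx$ at a critical point. After the affine change this gives $|y^3p_0'''(y)/p_0(y)|\asymp(nd)^{3/2}$ at bulk critical points.

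Hence the cubic term alone is $\asymp d^{3/2}n^{-1/2}$ times $|p_0|$ at the critical points, and likewise on gap-sized Rouch\'e circles. The argument therefore closes only for $d\ll n^{1/3}$. Uniform Plancherel--Rotach or Airy asymptotics sharpen constants but cannot remove this obstruction.

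Your exponent check is not a check either. With your stated $c_3\ll n^{-2}\log^{-2}n$ and scale $\sqrt d$, the heuristic $c_3d^3/\sqrt d$ yields $d^{5/2}\ll n^2\log^2n$, not $d^5\ll n^3\log^2n$.

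\textbf{The missing idea is to change the comparison family, so that more coefficients are matched exactly while positive real-rootedness is preserved.}
\begin{itemize}
\item The dominant $\log\Gamma(n+\tfrac12+z)$ part is exactly Laguerre-type. So the model $S^j/(B)_j$ matches $R_0,R_1,R_2$ and leaves a cubic defect of size only $2/(n^2\mathcal L_n)$ (Lemma~\ref{lem:third-defect}). This is where the logarithm genuinely comes from.
\item That defect is positive. Its sign is what allows a Jacobi factor $(A)_j/A^j$ with $A>0$ to match $R_3$.
\item The resulting $R_4$ defect is negative. This allows a second Jacobi factor, attached by finite-free multiplicative convolution, to match $R_4$ without losing positive real-rootedness.
\end{itemize}
The residual multiplier then vanishes at $0,\dots,4$ and has fifth derivative $O(1/(n^4\log n))$. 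On the derivative scale $r\asymp\sqrt{nd}$ its contribution is $(\sqrt{nd})^5/(n^4\log n)=d^{5/2}/(n^{3/2}\log n)$, which is exactly the wedge.

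The exponent $5$ counts matched coefficients. Stopping at the Laguerre model or at the first Jacobi model gives, by the same count, only $d^3\ll n\log^2n$ or $d^2\ll n\log n$.

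You would also still need the critical-point bound $|y^kp^{(k)}(y)/p(y)|\leq r^k$ for this new model. The paper obtains it from a perturbed Jacobi differential equation, not from Hermite asymptotics.
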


Thus hyperbolicity holds uniformly for
\[
 d\leq c\,n^{3/5}\log^{2/5}(n+2)
\]
with a suitable absolute $c>0$.

There is also a consequence for the derivative-aspect random-matrix
picture.  Griffin, Ono, Rolen, and Zagier proved that, for each fixed
$d$, a centered and scaled Jensen polynomial tends to a Hermite
polynomial as $n\to\infty$ \cite[Theorem~3]{GORZ}.  The semicircle
law for Hermite zeros then gives a global GUE statement after taking
$d\to\infty$ in a second limit.  The uniform comparison used here
allows the two parameters to tend to infinity together.

\begin{theorem}[Joint semicircle limit]
\label{thm:semicircle}
Let $K$ be as in Theorem~\ref{thm:main}, and let
$(n_\nu,d_\nu)$ be any sequence of positive integers such that
\[
 n_\nu\longrightarrow\infty,\qquad
 d_\nu\longrightarrow\infty,\qquad
 n_\nu^3\log^2(n_\nu+2)\geq Kd_\nu^5.
\]
Write $\rho_{1,\nu},\ldots,\rho_{d_\nu,\nu}$ for the zeros of
$J^{d_\nu,n_\nu}$ and put
\[
 R_{1,\nu}:=\frac{\gamma(n_\nu+1)}{\gamma(n_\nu)},\qquad
 \lambda_{j,\nu}:=
 \sqrt{\frac{n_\nu}{d_\nu}}
 \bigl(1+R_{1,\nu}\rho_{j,\nu}\bigr).
\]
Then, for every bounded continuous function $f\colon\R\to\R$,
\[
 \lim_{\nu\to\infty}\frac1{d_\nu}
 \sum_{j=1}^{d_\nu}f(\lambda_{j,\nu})
 =\frac1{2\pi}\int_{-2}^{2}f(x)\sqrt{4-x^2}\,\dd x.
\]
\end{theorem}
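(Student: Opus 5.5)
We plan to prove Theorem~\ref{thm:semicircle} by the method of moments, using a uniform comparison of $J^{d,n}$ with a Hermite polynomial of the kind that underlies Theorem~\ref{thm:main}. By Theorem~\ref{thm:main} all zeros $\rho_{j,\nu}$ are real, so the $\lambda_{j,\nu}$ are real and the empirical measures $\mu_\nu:=d_\nu^{-1}\sum_j\delta_{\lambda_{j,\nu}}$ are probability measures on $\R$. The semicircle law is determined by its moments (the Catalan numbers). It therefore suffices to show that, for every fixed $k$, the power sums satisfy $p_k(\nu):=d_\nu^{-1}\sum_j\lambda_{j,\nu}^k\to C_{k/2}$ for even $k$ and $\to0$ for odd $k$. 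Together with tightness (a consequence of the second moment) this gives weak convergence against bounded continuous $f$.

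\textbf{Step 1: rescaled polynomial.} Write $X=(-1+t\sqrt{d/n})/R_1$, so that $\lambda=t$, and consider
\[
P_\nu(t):=\frac{R_1^{d}}{\gamma(n)}\,J^{d,n}\!\Bigl(\frac{-1+t\sqrt{d/n}}{R_1}\Bigr)
=\sum_{j}\binom dj\frac{\gamma(n+j)}{\gamma(n)R_1^j}\bigl(-1+t\sqrt{d/n}\bigr)^j .
\]
The power sums of its zeros are polynomial (Newton) expressions in the ratios of its top coefficients. Specifically, $p_k$ depends only on the normalized coefficients of $t^{d},\dots,t^{d-k}$. We expand each $\gamma(n+j)/(\gamma(n)R_1^j)$ as $\exp\bigl(-\tfrac{j^2-j}{2}\,\delta(n)+\text{cubic}\cdots\bigr)$, using the asymptotic log-concavity expansion of $\log\gamma$ with $\delta(n)\asymp 1/(n\log n)$-type size, as in GORZ. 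This is the input I take from the paper's uniform comparison. Binomial summation then expresses the coefficients of $t^{d-m}$ as finite cumulant combinations.

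\textbf{Step 2: matching Hermite.} For the Hermite polynomial $H_d(\sqrt{d}\,t/2)$, suitably normalized, the zeros scaled by $\sqrt d$ have power sums $d^{-1}\sum x_j^k\to C_{k/2}$. This is the classical semicircle law, which can itself be proved via the recursion, or by the moment computation from the coefficient identity $\binom{d}{2m}\frac{(2m)!}{m!}\ldots$. We then show that each of the top $k+1$ normalized coefficients of $P_\nu$ agrees with the corresponding Hermite coefficient up to a relative error $o(1)$ uniformly in the range $n^3\log^2 n\ge Kd^5$. In fact only $n/d\to\infty$ together with control of the cubic term $d^3\cdot(\text{third cumulant})\cdot(d/n)^{3/2}\to0$ should be needed for fixed $k$. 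The exponent $d^5\lesssim n^3\log^2n$ is exactly what makes such cubic and higher corrections small, which is where the hypothesis enters.

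\textbf{Main obstacle.} The delicate point is the Newton identities. The coefficients of $t^{d-m}$ are of size $d^{m}$-ish, and $p_k$ involves large cancellations, so relative errors $o(1)$ in coefficients do not immediately give $o(1)$ errors in $d^{-1}\sum\lambda^k$. I would handle this instead via the Stieltjes transform: show $d^{-1}P_\nu'/P_\nu(z)\to$ the semicircle Stieltjes transform for each fixed $z$ with $\operatorname{Im}z\ge1$. This would be done by a uniform comparison $P_\nu(z)=c\,\tilde H_d(z)(1+o(1))$ on compact subsets away from the real axis, with $\log$ differences of size $o(d)$, which is all a $1/d$-normalized log-derivative needs. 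One gets it from the saddle-point/integral representation $\gamma(n+j)=\int t^{2(n+j)}\Phi$, which writes $J^{d,n}$ as an average of $(1+Xu)^d$ against a near-Gaussian density. The Gaussian part produces Hermite exactly, and the non-Gaussian correction contributes $\exp(o(d))$ under the stated wedge. Convergence of Stieltjes transforms on $\operatorname{Im}z\ge1$ for probability measures, together with tightness, then yields the theorem.
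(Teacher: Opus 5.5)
\textbf{There is a genuine gap, at the step that carries the whole argument.} You correctly note that coefficientwise relative errors $o(1)$ cannot control the power sums. You then rest everything on the estimate $\log P_\nu(z)-\log\bigl(c\,\tilde H_d(z)\bigr)=o(d)$, uniformly on compact subsets of the upper half-plane. The mechanism you give for it does not exist.

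\begin{itemize}
\item The normalized coefficients $R_j=\gamma(n+j)/\gamma(n)$ are strictly log-concave: Lemma~\ref{lem:third-defect} gives $q_0=R_1^2/R_2>1$. A positive random variable $W$ with $\E[W^j]=R_j$ would force $R_1^2\leq R_2$ by Cauchy--Schwarz. So $J^{d,n}$ is not an average of $(1+XW)^d$ against any positive density, near-Gaussian or not.
\item Even granting such a representation, a Gaussian $W\sim N(m,\sigma^2)$ gives $\E[(1+XW)^d]=(X\sigma)^d i^{-d}He_d\bigl(i(1+Xm)/(X\sigma)\bigr)$. Its zeros $X=-1/(m+i\sigma x_k)$ are non-real. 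So a Gaussian average does not ``produce Hermite exactly''; the Gaussian behind the Griffin--Ono--Rolen--Zagier Hermite limit has imaginary width.
\item The actual kernel identity is $\gamma(n)=\frac{n!}{(2n)!}M_n$, not $\int t^{2(n+j)}\Phi$. It leads to the Laguerre mixture \eqref{eq:kernel-laguerre-mixture}, and averaging there says nothing about zeros.
\item Your scale is off. The second difference of $\log R_j$ is $-\log q_k\sim-1/n$, which is what produces the factor $\sqrt{n/d}$. The quantity of size $1/(n\log n)$ is only the saddle correction $\delta_0$.
\end{itemize}

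So the claim that the ``non-Gaussian correction contributes $e^{o(d)}$'' across the wedge is exactly the joint-limit uniformity that needs proof, and you assert it rather than derive it. Nothing in the paper supplies it either: Theorem~\ref{thm:main} is not proved by a Hermite comparison of $J^{d,n}$.

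\textbf{How the paper gets the limit.} The paper never compares $J^{d,n}$ with Hermite analytically.
\begin{itemize}
\item Proposition~\ref{prop:abstract-stability} gives the counting transfer \eqref{eq:abstract-counting-transfer} between $P_{\mathrm F}$ and the real-rooted model $p_{\mathrm F}=p_1\boxtimes_d p_2$. This means the two empirical root measures are within Kolmogorov distance $1/d$, under any affine change of variable.
\item The semicircle law is proved for the model (Lemma~\ref{lem:model-semicircle}). The Jacobi matrix of $p_1=q_{A,B}$, centered at $B$ and scaled by $\sqrt{Bd}$, is $o(1)$-close in operator norm to $\mathcal H_d/\sqrt d$, whose eigenvalues are the probabilists' Hermite zeros. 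Weyl's inequality then gives the semicircle law for $p_1$.
\item The second factor is negligible by the ordered bound of Lemma~\ref{lem:free-ordered}: $|y_i-u_i|\leq\eta u_i$ with $\eta\sqrt{B/d}\to0$.
\item Finally $\sqrt{n/d}\,(1+R_1\rho_i)=-\sqrt{n/B}\,(\widehat y_i-B)/\sqrt{Bd}$, and the semicircle law is symmetric.
\end{itemize}

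\textbf{How to repair your route.} If you want to keep the Stieltjes-transform framing, use this counting transfer in place of your Hermite comparison. A Kolmogorov distance $\epsilon$ changes Stieltjes transforms by at most $\pi\epsilon/\operatorname{Im}z$. You would then still need the model's semicircle law, which the paper gets from the Jacobi-matrix structure of $p_1$; no $e^{o(d)}$ analytic estimate supplies it.
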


Theorem~\ref{thm:semicircle} concerns the global empirical measure.
It does not assert local sine-kernel statistics, eigenvalue-spacing
laws, or edge universality.

The proof separates the construction of a real-rooted comparison
polynomial from the stability of its zeros under the remaining
coefficient perturbation.  Section~\ref{sec:architecture} introduces
the quotient coordinates that organize the construction, displays the
three nested comparison families, and proves the abstract stability
principle used at the end.  The subsequent sections verify its
hypotheses.  The analytic input is the complex form of the saddle
estimate in \cite[Section~3]{GORTTW}; the algebraic input consists of
Jacobi root estimates and the real-root and logarithmic-mesh
properties of finite-free multiplicative convolution
\cite[Propositions~2.7 and 2.17]{MMP}.
After the hyperbolicity proof is complete,
Section~\ref{sec:joint-semicircle} identifies the semicircle law for
the comparison model and transfers it to the Jensen zeros.

\section{Comparison coordinates and proof architecture}
\label{sec:architecture}

Fix $n$ and normalize the coefficient sequence by
\begin{equation}\label{eq:Rq-global}
 R_0:=1,\qquad
 R_j:=\frac{\gamma(n+j)}{\gamma(n)},\qquad
 q_k:=\frac{R_{k+1}^2}{R_kR_{k+2}}
 \quad(j\geq1,\ k\geq0).
\end{equation}
Then
\[
 \frac{J^{d,n}(X)}{\gamma(n)}
 =\sum_{j=0}^d\binom djR_jX^j.
\]
Multiplying $R_j$ by $S^j$ only rescales the polynomial variable and
does not change any $q_k$.  These quotient invariants are the natural
coordinates for the comparison.

We note the following, whose proof is immediate:
\begin{lemma}[Quotient coordinates]\label{lem:quotient-coordinates}
Let $\widetilde R_0=R_0=1$ and $\widetilde R_1=R_1$.  If
\[
 \frac{\widetilde R_{k+1}^2}
 {\widetilde R_k\widetilde R_{k+2}}=q_k
 \qquad(0\leq k\leq m-2),
\]
then $\widetilde R_j=R_j$ for $0\leq j\leq m$.
\end{lemma}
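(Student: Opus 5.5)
The argument is a strong induction on $j$ that rewrites the quotient relation as a forward recursion. First note that the hypothesis implicitly requires $\widetilde R_k\neq0$ for $0\le k\le m$, since the quotients must be defined. The coefficients $R_j=\gamma(n+j)/\gamma(n)$ are positive because $\gamma(n)>0$, so every $q_k$ is a well-defined positive number. The same convention is used for the tilde sequence.

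The base cases $j=0$ and $j=1$ hold by assumption. For the inductive step, fix $k$ with $0\le k\le m-2$ and assume $\widetilde R_k=R_k$ and $\widetilde R_{k+1}=R_{k+1}$. Since $\widetilde R_k\neq0$ and $q_k\neq0$, the defining relation can be solved for the next term:
\[
 \widetilde R_{k+2}
 =\frac{\widetilde R_{k+1}^2}{q_k\,\widetilde R_k}
 =\frac{R_{k+1}^2}{q_k\,R_k}
 =R_{k+2}.
\]
The last equality is just the definition of $q_k$ in \eqref{eq:Rq-global}, rearranged. This advances the pair of equalities from $(k,k+1)$ to $(k+1,k+2)$. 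Running $k$ from $0$ to $m-2$ then gives $\widetilde R_j=R_j$ for every $0\le j\le m$.

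There is no substantial obstacle. The only point needing care is the non-vanishing that allows division at each step, and it is guaranteed by the positivity of $\gamma$ together with the standing assumption that the tilde quotients are defined. It also helps to record why the lemma is stated with exactly two initial values. The map $(R_1,q_0,\dots,q_{m-2})\mapsto(R_0,\dots,R_m)$ is injective once $R_0=1$ is fixed. The free parameter $R_1$ absorbs the scaling $R_j\mapsto S^jR_j$, which leaves every $q_k$ unchanged, so prescribing $R_1$ is precisely what removes that one-parameter ambiguity.
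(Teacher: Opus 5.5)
Your proof is correct and is essentially the paper's argument: the paper treats the lemma as immediate, recovering the sequence from $R_0,R_1$ one term at a time via $R_{k+2}=R_{k+1}^2/(q_kR_k)$, which is your forward induction. The nonvanishing needed for each division, which you justify from the positivity of $\gamma$ and the fact that the tilde quotients are defined, is the only point requiring care.
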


% \begin{proof}
% Starting from $R_0,R_1$, recover the sequence successively from
% \[
%  R_{k+2}=\frac{R_{k+1}^2}{q_kR_k}.
% \]
% \end{proof}

The construction passes through three nested ratio families:
\begin{align}
 \mathsf L_j(B,S)
 &:=\frac{S^j}{\rf{B}{j}},
 &&B,S>0,                                      \label{eq:Laguerre-model}\\
 \mathsf J_j(A,B,S)
 &:=\frac{S^j\rf{A}{j}}{A^j\rf{B}{j}},
 &&A,B,S>0,                                    \label{eq:Jacobi-model-family}\\
 \mathsf F_j(A,B,C,D,S)
 &:=S^j\frac{\rf{A}{j}\rf{C}{j}D^j}
 {A^jC^j\rf{B}{j}\rf{D}{j}},
 &&A,B,C,D,S>0.                                \label{eq:free-model-family}
\end{align}
The Laguerre family is the limit of the Jacobi family as
$A\to\infty$, while
\[
 \mathsf F_j(A,B,C,C,S)=\mathsf J_j(A,B,S).
\]
The polynomial associated with the first family is
\begin{align}
 \sum_{j=0}^d\binom dj\mathsf L_j(B,S)X^j
 &= {}_1F_1(-d;B;-SX)\notag\\
 &=\frac{d!}{\rf{B}{d}}L_d^{\,B-1}(-SX),
\label{eq:Laguerre-model-polynomial}
\end{align}
so it has $d$ simple negative zeros.

The role of the three families can now be stated without their
parameter calculations.  The Laguerre family is scaled to match
$R_1$ and $R_2$.  Its error at $R_3$ has a definite positive sign;
that sign permits a Jacobi parameter $A>0$ to match
$R_1,R_2,R_3$.  The Jacobi model has a defect of the opposite sign
at $R_4$.  A second Jacobi factor, introduced through finite-free
multiplicative convolution, absorbs that defect and produces a model
matching $R_0,\ldots,R_4$ exactly.  Lemma~\ref{lem:quotient-coordinates}
reduces each matching assertion to the corresponding equations for
the $q_k$.

It remains to explain why matching five coefficients is enough.
The following proposition isolates the final argument from the
special functions used to construct the model.

\begin{proposition}[Fifth-order multiplier stability]
\label{prop:abstract-stability}
Let $d\geq5$, and let
\[
 p(y)=\sum_{j=0}^d p_jy^j
\]
be a real polynomial with $d$ simple positive zeros and $p(0)\neq0$.
For $r>0$, put
\[
 \Omega_r:=\{z\in\C:\dist(z,[0,d])\leq2r\}.
\]
Suppose that $c$ is holomorphic on a neighborhood of $\Omega_r$,
that $c(0),\ldots,c(d)$ are real, and that
\[
 c(0)=\cdots=c(4)=1,\qquad c(d)>0,\qquad
 \sup_{\Omega_r}|c-1|\leq\varepsilon<16.
\]
Define
\[
 P(y):=\sum_{j=0}^dp_jc(j)y^j.
\]
If, at every critical point $y$ of $p$,
\begin{equation}\label{eq:abstract-derivative-ratios}
 \left|\frac{y^kp^{(k)}(y)}{p(y)}\right|\leq r^k
 \qquad(0\leq k\leq d),
\end{equation}
then $P$ has $d$ simple positive zeros.  Moreover, if $N_Q(t)$
denotes the number of zeros of a polynomial $Q$ in $(0,t]$, counted
with multiplicity, then
\begin{equation}\label{eq:abstract-counting-transfer}
 \sup_{t>0}|N_P(t)-N_p(t)|\leq1.
\end{equation}
\end{proposition}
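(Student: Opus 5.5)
The plan is to expand $P$ around $p$ by a Newton (finite-difference) series in the exponent. Then compare the signs of $P$ and $p$ at the critical points of $p$. Since $p$ has $d$ simple positive zeros $0<y_1<\dots<y_d$, Rolle gives critical points $\eta_1<\dots<\eta_{d-1}$ with $y_i<\eta_i<y_{i+1}$, and $p$ alternates in sign along $0,\eta_1,\dots,\eta_{d-1},+\infty$. I will show that $P$ has the same sign as $p$ at each of these points. At $0$ this holds because $P(0)=p_0c(0)=p(0)\neq0$. At $+\infty$ the leading coefficient of $P$ is $p_dc(d)$ with $c(d)>0$. At each $\eta_i$ it will follow from the strict inequality $|P(\eta_i)-p(\eta_i)|<|p(\eta_i)|$.

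The sign alternation forces at least one zero of $P$ in each of the $d$ intervals $(0,\eta_1),(\eta_1,\eta_2),\dots,(\eta_{d-1},\infty)$. Since $\deg P=d$, there is exactly one zero in each, and all are simple and positive. Each such interval also contains exactly one zero of $p$. Hence for any $t$, the counts $N_P(t)$ and $N_p(t)$ agree on all intervals lying entirely in $(0,t]$ and can differ only by the one interval containing $t$. This gives \eqref{eq:abstract-counting-transfer}.

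For the comparison at a critical point $y$, use the Newton expansion
\[
 c(j)=\sum_{k=0}^{d}\Delta^kc(0)\binom jk \qquad(0\le j\le d),
\]
which is exact on the integers $0,\dots,d$. Together with $\sum_j p_j\binom jk y^j=y^kp^{(k)}(y)/k!$, it gives
\[
 P(y)=\sum_{k=0}^d \Delta^kc(0)\,\frac{y^kp^{(k)}(y)}{k!}.
\]
The hypotheses $c(0)=\dots=c(4)=1$ mean $\Delta^0c(0)=1$ and $\Delta^kc(0)=0$ for $1\le k\le4$. Hence
\[
 P(y)-p(y)=\sum_{k=5}^d\Delta^kc(0)\,\frac{y^kp^{(k)}(y)}{k!}.
\]
By \eqref{eq:abstract-derivative-ratios} this yields
\[
 \left|\frac{P(y)-p(y)}{p(y)}\right|\le\sum_{k=5}^d\frac{|\Delta^kc(0)|}{k!}\,r^k .
\]
It therefore suffices to prove
\[
 \frac{|\Delta^kc(0)|}{k!}\le \varepsilon\,(2r)^{-k}\qquad(k\ge5),
\]
or anything slightly better. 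With that bound the right side is at most $\varepsilon\sum_{k\ge5}2^{-k}<\varepsilon/16<1$, which is exactly where the hypothesis $\varepsilon<16$ enters.

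The divided-difference bound comes from the Hermite–Cauchy formula. Since $\Delta^k$ annihilates constants for $k\ge1$, we may replace $c$ by $c-1$:
\[
 \frac{\Delta^kc(0)}{k!}=c[0,1,\dots,k]=\frac1{2\pi i}\oint_{\Gamma}\frac{c(w)-1}{\prod_{i=0}^k(w-i)}\,\dd w .
\]
Here $\Gamma\subset\Omega_r$ encloses $[0,k]$, which lies in $[0,d]$. Taking $\Gamma$ to be the boundary of the $2r$-neighborhood of $[0,k]$ gives $|w-i|\ge2r$ for every $i$, so $|c-1|\le\varepsilon$ on $\Gamma$ and the integrand is at most $\varepsilon(2r)^{-k-1}$.

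The main obstacle is the contour length $2k+4\pi r$. The crude bound produces a stray factor of order $1+k/r$, which spoils the geometric decay. To handle it I would exploit that only a bounded portion of $\Gamma$ is near any given node. On the two straight sides, at height $2r$ above $x\in[0,k]$, the product satisfies
\[
 \prod_i|w-i|\ge (2r)^{k+1}\prod_{i\neq\lfloor x\rceil}\sqrt{1+\tfrac{(x-i)^2}{4r^2}} .
\]
Integrating in $x$ should absorb the length, at least when $r\gtrsim1$, which is the regime the paper's application presumably uses. For $r$ small one instead uses $|w-i|\ge|x-i|$ to get factorial-type decay. If the constant comes out slightly off, I would either allow the contour to depend on $k$ (staying inside $\Omega_r$), or trade some of the slack in $\varepsilon<16$ against a ratio like $(3/2)^{-k}$ in place of $2^{-k}$. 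The rest of the argument is routine bookkeeping.
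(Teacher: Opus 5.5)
Your overall architecture matches the paper's. The Newton expansion $P(y)=\sum_k\Delta^kc(0)\,y^kp^{(k)}(y)/k!$, the vanishing of $\Delta^1c(0),\dots,\Delta^4c(0)$, the sign comparison at $0$, at the $d-1$ critical points and at $+\infty$, and the counting argument are all correct.

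The gap is the step you flag yourself: the bound $|\Delta^kc(0)|/k!\leq\varepsilon(2r)^{-k}$. This is the heart of the estimate, and your sketch does not establish it.

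\begin{itemize}
\item The contour integral over the boundary of the $2r$-neighbourhood of $[0,k]$ gives only $\varepsilon(2r)^{-k}\bigl(1+k/(2\pi r)\bigr)$.
\item Your remedy does not remove this excess. When $r\gg k^2$, the product $\prod_i|w-i|$ on the straight sides is $(2r)^{k+1}\bigl(1+O(k^3/r^2)\bigr)$. So your refined lower bound gains nothing there, and the sides still cost about $\varepsilon(2r)^{-k}k/(2\pi r)$ beyond the target. Any compensation would have to come from the semicircular caps, which you do not analyse.
\item There is no slack to trade. Since $\sum_{k\geq5}2^{-k}=1/16$ exactly, the hypothesis $\varepsilon<16$ is calibrated to the sharp constant. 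A loss by any fixed factor $\kappa>1$ makes $\kappa\varepsilon\sum_{k=5}^d2^{-k}$ exceed $1$ for large $d$ and $\varepsilon$ close to $16$. Replacing $2^{-k}$ by $(2/3)^k$ works uniformly in $d$ only when $\varepsilon\leq81/32$.
\end{itemize}

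The missing idea is to avoid a contour around all the nodes. Instead, write the forward difference as an average of $c^{(k)}$ over real points:
\[
 \Delta^kc(0)=\int_{[0,1]^k}c^{(k)}(t_1+\cdots+t_k)\dd t_1\cdots\dd t_k ,
\]
with $t_1+\cdots+t_k\in[0,k]\subset[0,d]$. Equivalently, use Hermite--Genocchi for $c[0,1,\dots,k]$.

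For real $x\in[0,d]$, the closed disk $|w-x|\leq2r$ lies in $\Omega_r$. Cauchy's inequality for $c-1$ on that circle gives $|c^{(k)}(x)|\leq k!\,\varepsilon(2r)^{-k}$ for $k\geq1$. There is no length penalty, because the circumference $2\pi\rho$ cancels exactly against $\rho^{-k-1}$, leaving $\rho^{-k}$. The paper takes $\rho<2r$ and lets $\rho\uparrow2r$.

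Integrating over the unit cube then yields exactly $|\Delta^kc(0)|/k!\leq\varepsilon(2r)^{-k}$. With this substitution your proof coincides with the paper's.
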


\begin{proof}
Write
\[
 \Delta^kc(0)
 :=\sum_{\ell=0}^k(-1)^{k-\ell}\binom k\ell c(\ell).
\]
Newton interpolation gives
\begin{equation}\label{eq:abstract-newton}
 P(y)=\sum_{k=0}^d
 \Delta^kc(0)\frac{y^kp^{(k)}(y)}{k!}.
\end{equation}
Indeed, substitute
$c(j)=\sum_{k=0}^j\binom jk\Delta^kc(0)$ and interchange the two
finite sums.

For $k\geq1$, repeated use of the fundamental theorem of calculus
gives
\[
 \Delta^kc(0)
 =\int_{[0,1]^k}c^{(k)}(t_1+\cdots+t_k)
   \dd t_1\cdots\dd t_k.
\]
Cauchy's estimate on disks of radius $\rho<2r$ centered on
$[0,d]$, followed by $\rho\uparrow2r$, therefore yields
\begin{equation}\label{eq:abstract-difference-bound}
 \frac{|\Delta^kc(0)|}{k!}
 \leq\frac{\varepsilon}{(2r)^k}.
\end{equation}
The five matching conditions imply
\[
 \Delta^0c(0)=1,\qquad
 \Delta^kc(0)=0\quad(1\leq k\leq4).
\]
At a critical point of $p$, equations
\eqref{eq:abstract-newton}--\eqref{eq:abstract-difference-bound}
and \eqref{eq:abstract-derivative-ratios} give
\[
 \left|\frac{P(y)}{p(y)}-1\right|
 \leq\varepsilon\sum_{k=5}^d2^{-k}
 <\frac{\varepsilon}{16}<1.
\]
Thus $P$ and $p$ have the same sign at every critical point.
They also have the same sign at $0$ and for large positive $y$,
because $c(0)=1$ and $c(d)>0$.  The signs of $p$ at $0$, at its
$d-1$ critical points, and at $+\infty$ alternate.  Hence the
intermediate value theorem gives a sign-changing zero of $P$ in each
of the $d$ intervening intervals.  These exhaust its degree, so all
are simple.  The zeros of $p'$ strictly interlace those of $p$, so
$p$ also has one zero in each of these intervals.  If $t$ lies in
one of them, $p$ and $P$ have the same number of zeros in all
preceding intervals, and their counts inside the interval containing
$t$ differ by at most one.  This proves
\eqref{eq:abstract-counting-transfer}.
\end{proof}

The rest of the proof verifies the hypotheses of
Proposition~\ref{prop:abstract-stability}.  The saddle estimate gives
the two defect signs needed to choose the model parameters and,
after exact matching through $R_4$, the bound
\[
 \sup_{\Omega_r}|c-1|
 \ll\frac{d^{5/2}}{n^{3/2}\log(n+2)}.
\]
The finite-free factorization gives the required positive roots, and
a perturbed Jacobi equation gives
\eqref{eq:abstract-derivative-ratios}.  The condition that the
displayed error be uniformly small is exactly
$d^5\ll n^3\log^2(n+2)$.

\section{Moment representation and the Laguerre baseline}

The integral representation below explains the distinguished
Laguerre normalization used to identify the positive defect at
$R_3$.

Write
\[
 \Xi(t):=\xi\!\left(\frac12+it\right).
\]
Riemann's standard integral representation has the form
\[
 \Xi(t)=\int_0^\infty \Phi(u)\cos(tu)\dd u,
 \qquad \Phi(u)>0,
\]
where $\Phi$ is the kernel obtained from the Jacobi theta series;
the integral and all the differentiations used below converge
absolutely.
Put
\[
 M_n:=\int_0^\infty \Phi(u)u^{2n}\dd u.
\]
Since $\xi_{\mathrm c}(z)=\Xi(-iz)$, comparison of the expansions of
$\cosh(zu)$ and $\xi_{\mathrm c}(z)$ gives
\begin{equation}\label{eq:gamma-moment}
 \gamma(n)=\frac{n!}{(2n)!}M_n.
\end{equation}

Define the tilted kernel probability measure
\[
 \dd\mathbb P_n(u)
 :=\frac{\Phi(u)u^{2n}}{M_n}\dd u
\]
and let $U$ denote its coordinate random variable.  Legendre's
duplication formula
\[
 \Gamma(z)\Gamma\!\left(z+\frac12\right)
 =2^{1-2z}\sqrt\pi\,\Gamma(2z)
\]
gives
\begin{equation}\label{eq:gamma-ratio-mixture}
 \frac{\gamma(n+j)}{\gamma(n)}
 =\frac{1}{4^j\rf{n+\frac12}{j}}\,
   \E_n[U^{2j}].
\end{equation}
Consequently,
\begin{align}
 \frac{J^{d,n}(X)}{\gamma(n)}
 &=\E_n\!\left[
   \sum_{j=0}^{d}\binom dj
   \frac{(XU^2)^j}{4^j\rf{n+\frac12}{j}}
   \right] \notag\\
 &=\E_n\!\left[
 {}_1F_1\!\left(-d;n+\frac12;-\frac{XU^2}{4}\right)
 \right] \notag\\
 &=\frac{d!}{\rf{n+\frac12}{d}}\,
   \E_n\!\left[
   L_d^{\,n-\frac12}\!\left(-\frac{XU^2}{4}\right)
   \right].
\label{eq:kernel-laguerre-mixture}
\end{align}
For every fixed $U>0$, the polynomial inside the expectation in
\eqref{eq:kernel-laguerre-mixture} has $d$ distinct negative roots,
although averaging does not in general preserve hyperbolicity.

There is, however, an exact hyperbolicity-preserving random-dilation
model.
Let
\[
 b:=n+\frac12,\qquad B>b,
\]
and let $Z$ have the beta distribution $\operatorname{Beta}(b,B-b)$.
Then
\[
 \E[Z^j]=\frac{\rf b j}{\rf B j}.
\]
It follows immediately that, for $a>0$,
\begin{equation}\label{eq:sonine}
 \E\!\left[
 {}_1F_1\!\left(-d;b;-\frac{aZX}{4}\right)
 \right]
 =
 {}_1F_1\!\left(-d;B;-\frac{aX}{4}\right).
\end{equation}
This is the beta--Sonine index-raising operation for Laguerre
polynomials.  With $S=a/4$, it realizes the Laguerre family
\eqref{eq:Laguerre-model} as a scaled beta dilation of the baseline
in \eqref{eq:kernel-laguerre-mixture}.  This explains why that family
is the first comparison model; no factorization of the tilted kernel
measure is asserted.

\section{The analytic saddle and the first defect}

The saddle calculation is used in three places: to determine the
sign of the Laguerre error at $R_3$, to determine the sign of the
Jacobi error at $R_4$, and to bound the fifth derivative of the final
residual multiplier.  We record all the required derivative
information here.

Continue the moment in \eqref{eq:gamma-moment} by its absolutely
convergent Mellin integral and define
\[
 \gamma(z):=\frac{\Gamma(z+1)}{\Gamma(2z+1)}M_z.
\]
The same duplication formula gives
\begin{equation}\label{eq:gamma-h}
 \gamma(z)
 =\frac{\sqrt\pi\,M_z}{4^z\Gamma(z+\frac12)},
 \qquad
 M_z:=\int_0^\infty\Phi(u)u^{2z}\dd u.
\end{equation}
To identify this continuation with the saddle formula in
\cite[Section~3]{GORTTW}, set
\[
 F(s):=\int_1^\infty(\log t)^s t^{-3/4}
       \sum_{k\geq1}e^{-\pi k^2t}\dd t.
\]
The continuation of the exact kernel identity in
\cite[(3.1)]{GORTTW}, with the normalization corrected as explained
below, is
\begin{equation}\label{eq:corrected-M-via-F}
 M_z=2^{-2z-2}\left\{
 32\binom{2z}{2}F(2z-2)-F(2z)
 \right\}.
\end{equation}
For real $x>3$, let $L_x$ be the positive solution of
\begin{equation}\label{eq:L-def}
 x=L_x\left(\pi e^{L_x}+\frac34\right).
\end{equation}
For complex $x$ in the right half-plane, $L_x$ denotes the branch of
the holomorphic continuation used in \cite[Section~3]{GORTTW}.  Put
\[
 Q_x:=(1+L_x)x-\frac34L_x^2.
\]
The main saddle term is
\begin{equation}\label{eq:G0-def}
 G_0(x):=(x+1)\log L_x+\frac{L_x}{4}-\frac{x}{L_x}
          -\frac12\log Q_x.
\end{equation}

\begin{proposition}[Sectorial logarithmic saddle]
\label{prop:sectorial-saddle}
There are $0<\theta<\pi/2$ and $x_0>0$ such that $M_z$ is
nonzero in
\[
 \mathfrak S_\theta
 :=\{z\in\C:|z|\geq x_0,\ |\arg z|\leq\theta\}.
\]
On this sector, let $h(z)=\log M_z$ be the branch real on the
positive axis.  If $N=2z-2$, then
\begin{align}
 h(z)
 &=G_0(N)+\log\left(32\binom{2z}{2}\right)
    -(2z+2)\log2+c_{\mathrm{sad}}+\mathcal R(z),
\label{eq:sectorial-h}
\end{align}
where $c_{\mathrm{sad}}$ is real.  For every fixed
$0<\varepsilon_0<1/2$ and $0\leq r\leq5$,
\begin{equation}\label{eq:sectorial-remainder}
 \mathcal R^{(r)}(z)
 =O_{\varepsilon_0,r,\theta}
   \left(|z|^{-r-1+\varepsilon_0}\right)
\end{equation}
uniformly on a slightly smaller closed sector.
\end{proposition}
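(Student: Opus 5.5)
The plan is to derive the sectorial expansion from the corrected identity \eqref{eq:corrected-M-via-F} by a uniform complex saddle-point analysis of $F(s)$. The term $32\binom{2z}{2}F(2z-2)$ will be the main term and $F(2z)$ a relatively small correction. Substituting $t=e^{v}$ gives
\[
 F(s)=\int_0^\infty v^s e^{v/4}\sum_{k\ge1}e^{-\pi k^2e^{v}}\dd v .
\]
For large $|s|$ in a sector, the $k=1$ term dominates. Its phase $s\log v+v/4-\pi e^{v}$ has a critical point at $s/v=\pi e^{v}-\tfrac14$. After the standard shift in \cite[Section~3]{GORTTW}, which accounts for the $-3/4$ weight, this becomes the equation \eqref{eq:L-def} for $L_s$. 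For real $s>3$ the root is unique and positive. By the implicit function theorem it continues holomorphically to a sector $|\arg s|\le\theta$ with $|s|\ge x_0$, and $L_s\sim\log s$ there.

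The first step is the Laplace expansion at this saddle with $s=N=2z-2$ complex. The second derivative of the phase at the saddle is, up to sign, $Q_N/L_N^2$, which produces the $-\tfrac12\log Q_N$ term in \eqref{eq:G0-def}. The value of the phase at the saddle gives $(N+1)\log L_N+L_N/4-N/L_N$ plus absolute constants, which are collected in $c_{\mathrm{sad}}$. Since $\sqrt{2\pi}$-type constants are real, $c_{\mathrm{sad}}$ is real. Thus
\[
 \log F(N)=G_0(N)+c_{\mathrm{sad}}+\mathcal E(N).
\]
Here $\mathcal E$ is controlled by the next Laplace term, of relative size roughly $1/(N L_N)$. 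It also absorbs the terms $k\ge2$, which are smaller by a factor $e^{-3\pi e^{L}}$, and the tails away from the saddle. To carry this out in a complex sector I would deform the $v$-contour through the complex saddle along a steepest-descent direction. I would then check that $\operatorname{Re}$ of the phase drops by $\gg |N|/L_N$ outside a neighbourhood of width $|N|^{-1/2+\varepsilon_0/2}$. Combining this with Taylor expansion inside the neighbourhood gives $\mathcal E(N)=O(|N|^{-1+\varepsilon_0})$ uniformly on the sector; the $\varepsilon_0$ loss absorbs the powers of $\log|N|$.

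Next, write
\[
 M_z=2^{-2z-2}\,32\tbinom{2z}{2}F(2z-2)\bigl(1-F(2z)/(32\tbinom{2z}{2}F(2z-2))\bigr).
\]
Using the saddle formula at $N$ and at $N+2$, the ratio $F(N+2)/F(N)$ is about $L_N^2$. Dividing by $32\binom{2z}{2}\asymp|z|^2$ leaves a quantity of size $O(\log^2|z|/|z|^2)$. So the bracket is nonzero for $|z|\ge x_0$, which shows that $M_z\ne0$ on $\mathfrak S_\theta$. Taking logarithms with the branch real on the positive axis then yields \eqref{eq:sectorial-h}, with $\mathcal R=\mathcal E(N)+\log(1-\cdots)=O(|z|^{-1+\varepsilon_0})$, which is the case $r=0$ of \eqref{eq:sectorial-remainder}.

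For the derivative bounds with $r\le5$, I would not differentiate the saddle expansion term by term. Instead I would apply Cauchy's estimate on disks of radius $\asymp|z|$ contained in the sector $\mathfrak S_{\theta}$ around points of a slightly smaller sector. Each derivative then gains a factor $|z|^{-1}$, so $r=0$ implies all $r\le5$, uniformly on the smaller closed sector. The main obstacle is the uniform complex steepest-descent estimate for $F$. In particular, one must verify that the deformed contour stays in the region where $\sum_k e^{-\pi k^2e^{v}}$ is controlled by its first term and where $v^s$ has a consistent branch. It must also be checked that the correction to the normalization of \cite[(3.1)]{GORTTW} produces exactly \eqref{eq:corrected-M-via-F}, which I would confirm by comparing Mellin transforms of $\Phi$ against the theta series term by term.
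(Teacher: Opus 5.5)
Your argument is correct and follows a different route from the paper.

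\textbf{The paper's route.} The paper does no saddle analysis of its own. It quotes the complex, sector-uniform form of \cite[(3.2)]{GORTTW} for $\gamma(z)$, after the factor-$8$ normalization correction. Since the leading factor is holomorphic and nonzero, $\gamma(z)$ and hence $M_z$ do not vanish. It then takes logarithms and converts back to $M_z$ through \eqref{eq:gamma-h}, using a sectorial Stirling expansion of $\Gamma(z+\frac12)$ and absorbing the leftover elementary terms into $c_{\mathrm{sad}}$.

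\textbf{Your route.} You start from the exact identity \eqref{eq:corrected-M-via-F} and prove the asymptotic for $F(N)$ yourself, by steepest descent in the strip $|\operatorname{Im} v|<\pi/2$ where the theta sum converges. You then dispose of $F(2z)$ with the explicit ratio bound $O(\log^2|z|/|z|^2)$. This gives a self-contained proof of exactly the complex uniformity that the paper takes from a one-sentence remark in \cite{GORTTW}. It also lands directly on the shape of \eqref{eq:sectorial-h}, with no Stirling step and no consolidation of terms, and it identifies $c_{\mathrm{sad}}=\frac34+\frac12\log(2\pi)$. The price is that you must actually carry out the contour deformation and uniform tail bounds that the paper outsources. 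The final step, Cauchy's inequalities on disks of radius $\asymp|z|$, is the same in both.

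\textbf{Details to correct (none affects the conclusion).}

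\emph{Choice of phase.} With the phase $N\log v+v/4-\pi e^{v}$ as you wrote it, the critical point solves $N=v(\pi e^{v}-\frac14)$, not \eqref{eq:L-def}. To obtain $L_N$ and $G_0$ exactly, put the Jacobian $e^{v}$ into the amplitude and use the phase $N\log v-\frac34 v-\pi e^{v}$; equivalently, apply Laplace's method in the $t$-variable. This phase has second derivative $-K_N=-Q_N/L_N^2$ at $L_N$. Its value there, together with $\sqrt{2\pi/K_N}$ and the amplitude $e^{L_N}$, gives exactly $G_0(N)+\frac34+\frac12\log(2\pi)$. Your version moves the saddle by $O(L_N/|N|)$ and changes the main term only by a factor $1+O(L_N/|N|)$, so it is harmless, but the phrase ``standard shift'' should mean this.

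\emph{Size of the first correction.} Every derivative of order $\ge2$ of the phase at the saddle is $\asymp |N|/L_N$. So the first Laplace correction has relative size $\asymp L_N/|N|$, not $1/(|N|L_N)$.

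\emph{Size of the tail drop.} The drop of $\operatorname{Re}$ of the phase at distance $|N|^{-1/2+\varepsilon_0/2}$ from the saddle is $\asymp |N|^{\varepsilon_0}/L_N$, not $|N|/L_N$. This still makes the tails superpolynomially small and gives the $O(|N|^{-1+\varepsilon_0})$ remainder you need.
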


Here $h$ is the exact logarithm of the continued moment, whereas
$G_0$ is only its principal saddle contribution, evaluated at
$N=2z-2$.  The remaining explicit terms account for normalization,
and $\mathcal R$ is the controlled analytic remainder.

\begin{proof}
Write $N=2z-2$ and
\[
 K_N:=\left(L_N^{-1}+L_N^{-2}\right)N-\frac34
     =\frac{Q_N}{L_N^2}.
\]
The complex form of \cite[(3.2)]{GORTTW}, stated in the sentence
following that equation, is uniform on every fixed closed sector in
$\Re z>1$ and reads, in the normalization of this paper,
\begin{align}
 \gamma(z)
 &=\frac{e^{z-2}z^{z+1/2}L_N^N}
 {2^{2z-2}N^{N+1/2}}
 \left(\frac{2\pi}{K_N}\right)^{1/2}
 \exp\left(\frac{L_N}{4}-\frac{N}{L_N}+\frac34\right)
 \notag\\
 &\qquad\times
 \left(1+O_{\varepsilon_0}
       (|z|^{-1+\varepsilon_0})\right).
\label{eq:complex-gamma-saddle}
\end{align}
Here all powers use the branches obtained by continuation from the
positive axis.  The factor $2^{2z-2}$ incorporates a normalization
that is easy to miss: with
$\Lambda(s)=\pi^{-s/2}\Gamma(s/2)\zeta(s)$, the coefficients in
\cite{GORZ} are those of
$(4w^2-1)\Lambda(\frac12+w)=8\xi(\frac12+w)$.  Thus the right sides
of \cite[(3.1)--(3.2)]{GORTTW}, when used for the coefficients
defined here, are divided by $8$.  This affects only an additive
constant after taking logarithms.  It does not affect the results
of \cite{GORTTW}, whose use of these coefficients in Section~3 is
through ratios in which the constant cancels.

The leading factor in \eqref{eq:complex-gamma-saddle} is holomorphic
and nonzero on a sufficiently small fixed sector.  For large $|z|$
the relative error has modulus below $1/2$, so $\gamma(z)$ and,
by \eqref{eq:gamma-h}, $M_z$ are nonzero there.  Taking the
logarithm, using $K_N=Q_N/L_N^2$, and applying sectorial Stirling
asymptotics to $\Gamma(z+\frac12)$ in \eqref{eq:gamma-h} gives
\eqref{eq:sectorial-h}.  Consolidating the remaining elementary
terms into the explicit terms following $G_0(N)$ changes only
$c_{\mathrm{sad}}+O(|z|^{-1})$.

The difference between the two sides of
\eqref{eq:complex-gamma-saddle}, divided by its nonzero leading
factor, is holomorphic.  Hence so is the logarithmic remainder
$\mathcal R$.  A disk $|\zeta-z|\leq\delta|z|$ about a point in a
smaller closed sector stays in the original sector, and
$|\zeta|\asymp|z|$ there.  Cauchy's inequalities applied to the
$O(|z|^{-1+\varepsilon_0})$ remainder prove
\eqref{eq:sectorial-remainder}.
\end{proof}

\begin{lemma}[Signed moment-saddle derivatives]
\label{lem:signed-saddle}
Put
\[
 \mathcal L_x:=L_{2x-2}.
\]
As $x\to+\infty$,
\begin{align}
 h''(x)
 &=\frac{2}{x\mathcal L_x}
   \left(1+O\!\left(\frac1{\mathcal L_x}\right)\right),
\label{eq:hsecond-asymptotic}\\
 h'''(x)
 &=-\frac{2}{x^2\mathcal L_x}
   \left(1+O\!\left(\frac1{\mathcal L_x}\right)\right).
\label{eq:hthird-asymptotic}
\end{align}
In addition,
\begin{equation}\label{eq:hfourth-asymptotic}
 h^{(4)}(x)
 =\frac{4}{x^3\mathcal L_x}
   \left(1+O\!\left(\frac1{\mathcal L_x}\right)\right).
\end{equation}
There are absolute constants $\eta,C>0$ such that
\begin{align}
 |h^{(4)}(x+w)|
 &\leq\frac{C}{x^3\log x},\label{eq:hfourth}\\
 |h^{(5)}(x+w)|
 &\leq\frac{C}{x^4\log x}
\label{eq:hfifth}
\end{align}
whenever $|w|\leq\eta x$.
\end{lemma}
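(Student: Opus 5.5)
The plan is to differentiate the decomposition \eqref{eq:sectorial-h} of Proposition~\ref{prop:sectorial-saddle} term by term. Write
\[
 h(x)=G_0(2x-2)+\log\!\bigl(16\cdot 2x(2x-1)\bigr)-(2x+2)\log2+c_{\mathrm{sad}}+\mathcal R(x).
\]
The elementary logarithm contributes $O(x^{-k})$ to the $k$-th derivative. This is $O(1/(x^{k-1}\mathcal L_x))$ relative to the claimed main terms for $k\ge2$, since $\mathcal L_x\asymp\log x$. The linear term disappears after two derivatives. By \eqref{eq:sectorial-remainder} with $\varepsilon_0=1/4$, say, $\mathcal R^{(k)}=O(x^{-k-3/4})$, which is also absorbed. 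So everything reduces to the asymptotics of $\frac{d^k}{dx^k}G_0(2x-2)=2^kG_0^{(k)}(N)$ with $N=2x-2$.

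First, compute $G_0'(N)$ exactly using the defining relation \eqref{eq:L-def}. Differentiating $N=L(\pi e^L+\tfrac34)$ gives
\[
 L'=\frac{1}{\pi e^L(1+L)+\tfrac34}.
\]
Since $\pi e^L=N/L-\tfrac34$, this simplifies to $L'=L/\bigl((1+L)N-\tfrac34L^2\bigr)=L/Q_N$. Then
\[
 \frac{d}{dN}\Bigl[(N+1)\log L+\frac L4-\frac NL\Bigr]
 =\log L-\frac1L+L'\Bigl(\frac{N+1}{L}+\frac14+\frac N{L^2}\Bigr).
\]
Here $L'(\cdots)=O(1/N)$, so $G_0'(N)=\log L_N-1/L_N+O(1/N)$. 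The term $-\frac12\log Q_N$ also contributes only $O(1/N)$, and its higher derivatives are $O(N^{-k})$.

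Next, differentiate again. With $L'=L/Q\sim 1/(N(1+1/L))$, one gets
\[
 G_0''(N)=\frac{L'}{L}\Bigl(1+\frac1L\Bigr)+O(N^{-2})=\frac1N\bigl(1+O(1/L)\bigr)\cdot\frac1L.
\]
More carefully, $(\log L)'=1/Q=\frac{1}{N(1+L)}(1+O(L/N))$, which gives $\frac{1}{NL}(1+O(1/L))$. Multiplying by $2^2$ and using $N\sim2x$ yields $h''\sim 2/(x\mathcal L_x)$. For the third and fourth derivatives, write $G_0''(N)=\phi(N)$ with $\phi(N)=\frac{1}{N L}(1+\psi(L))+O(N^{-2})$, where $\psi$ is an explicit function of $L$ that is $O(1/L)$ with derivatives in $L$ of size $O(L^{-2})$. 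Each further $N$-derivative either hits $1/N$, producing a factor $-k/N$, or hits a function of $L$, producing an extra factor $L'/L\cdot O(1)=O(1/(NL))$. The latter is relatively $O(1/L)$. This gives $G_0'''\sim-1/(N^2L)$ and $G_0^{(4)}\sim2/(N^3L)$. Multiplying by $2^3$ and $2^4$ and using $N^k\sim(2x)^k$ gives $-2/(x^2\mathcal L_x)$ and $4/(x^3\mathcal L_x)$. One must also check that the $O(N^{-k})$ terms are relatively $O(1/L)$, which holds trivially.

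For the complex bounds \eqref{eq:hfourth}--\eqref{eq:hfifth}, I would avoid a complex asymptotic expansion. Instead, apply Cauchy's estimates. The function $H(z):=h''(z)-h''_{\text{main}}$ is not needed. Directly, $h''$ is holomorphic on the sector, and the computation above shows $|G_0''(N)|\ll 1/(|N|\log|N|)$ for complex $N$ in a sector, since $L_N$ is the holomorphic branch with $|L_N|\asymp\log|N|$, $L'=L/Q$, and $|Q|\asymp|N||L|$. The remainder terms are $O(|z|^{-7/4})$ by \eqref{eq:sectorial-remainder}. Hence $|h''(\zeta)|\le C'/(|\zeta|\log|\zeta|)$ for $|\zeta-x|\le 2\eta x$. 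Cauchy's estimates on discs of radius $\eta x$ around $x+w$ then give $|h^{(4)}|\ll 1/(x^3\log x)$ and $|h^{(5)}|\ll 1/(x^4\log x)$.

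The main obstacle is controlling the branch $L_N$ for complex $N$. Specifically, one must show $|L_N|\asymp\log|N|$ and $Q_N\neq0$ with $|Q_N|\asymp|N\,L_N|$ on a sector of opening about $\theta$. I would handle this by writing $L=\log N-\log\log N+o(1)$ on the sector via the Lambert-type equation, possibly after shrinking $\eta$ so that $x+w$ stays inside the sector, which also guarantees $\log|x+w|\asymp\log x$.
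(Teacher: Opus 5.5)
Your proof is correct. For the three real asymptotics it is the paper's argument: differentiate $G_0$ using $L_N'=L_N/Q_N$, multiply by $2^k$, and absorb the elementary terms and $\mathcal R^{(k)}$ via Proposition~\ref{prop:sectorial-saddle}.

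For the uniform bounds \eqref{eq:hfourth}--\eqref{eq:hfifth} your route is slightly different. The paper uses the expansions \eqref{eq:Gfourth}--\eqref{eq:Gfifth} at complex $N$, together with the $r=4,5$ cases of \eqref{eq:sectorial-remainder}. You need only a size bound $|h''(\zeta)|\ll 1/(|\zeta|\log|\zeta|)$ on a disc of radius $2\eta x$, followed by Cauchy's inequalities. This is more economical. It needs no $G_0^{(5)}$ computation, only the $r=2$ case of \eqref{eq:sectorial-remainder} at complex points, and only crude complex control $|L_N|\asymp\log|N|$ and $|Q_N|\asymp|N||L_N|$, which both routes require anyway. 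The paper's route yields asymptotics rather than mere bounds at complex points, but the lemma does not need that.

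One computational slip should be fixed. The term $L'\bigl(\tfrac{N+1}{L}+\tfrac14+\tfrac{N}{L^2}\bigr)$ is not $O(1/N)$; it equals $\tfrac1L+\tfrac{L+1}{Q}$. The $-1/L$ therefore cancels, giving
\[
G_0'(N)=\log L_N+\tfrac{L_N+1}{Q_N}-\tfrac{Q_N'}{2Q_N}=\log L_N+O(1/N),
\]
and hence $G_0''=1/Q_N+O(N^{-2})$ as in \eqref{eq:Gsecond}. Your intermediate expression $\tfrac1Q\bigl(1+\tfrac1L\bigr)$ differs from this only by a relative $O(1/L)$. So nothing in the stated lemma is affected, and your ``more carefully'' line already uses the correct $1/Q$.
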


\begin{proof}
Differentiating the saddle equation gives
\[
 L_N'=\frac{L_N}{Q_N}.
\]
Using this identity in $G_0$ gives
\begin{align}
 G_0''(N)
 &=\frac1{Q_N}+O(N^{-2}),\label{eq:Gsecond}\\
 G_0'''(N)
 &=-\frac1{N^2L_N}
   \left(1+O(L_N^{-1})\right),\label{eq:Gthird}\\
 G_0^{(4)}(N)
 &=\frac2{N^3L_N}
   \left(1+O(L_N^{-1})\right),\label{eq:Gfourth}\\
 G_0^{(5)}(N)
 &=O\!\left(\frac1{N^4L_N}\right).
\label{eq:Gfifth}
\end{align}
Indeed, $Q_N=NL_N(1+O(L_N^{-1}))$ and
$Q_N'=L_N(1+O(L_N^{-1}))$; the last two formulas follow by further
differentiation of \eqref{eq:Gthird}.

The asserted estimates follow from
Proposition~\ref{prop:sectorial-saddle} and
\eqref{eq:Gsecond}--\eqref{eq:Gfifth}; the chain rule contributes
the factors $2^m$.  The elementary terms following $G_0(2z-2)$ in
\eqref{eq:sectorial-h}, and the remainder controlled by
\eqref{eq:sectorial-remainder}, are of lower order.  A sufficiently
small fixed $\eta>0$ makes every disk $|w|\leq\eta x$ lie in the
sector, which proves the two uniform bounds.
\end{proof}

\begin{lemma}[Positive Laguerre defect at $R_3$]
\label{lem:third-defect}
For all sufficiently large $n$, $q_0>1$.  Set
\[
 B_{\mathrm L}:=(q_0-1)^{-1},\qquad
 S_{\mathrm L}:=B_{\mathrm L}R_1,
 \qquad
 R_j^{(\mathrm L)}:=\mathsf L_j(B_{\mathrm L},S_{\mathrm L}).
\]
Then the Laguerre model matches the first two normalized
coefficients,
\[
 R_1^{(\mathrm L)}=R_1,\qquad R_2^{(\mathrm L)}=R_2,
\]
and predicts
\[
 q_1^{(\mathrm L)}
 :=\frac{(R_2^{(\mathrm L)})^2}
          {R_1^{(\mathrm L)}R_3^{(\mathrm L)}}
 =\frac{B_{\mathrm L}+2}{B_{\mathrm L}+1}
 =2-\frac1{q_0}.
\]
Moreover,
\begin{equation}\label{eq:Dn-asymptotic}
 \mathcal D_n
 :=\log\frac{R_3}{R_3^{(\mathrm L)}}
 =\log\frac{q_1^{(\mathrm L)}}{q_1}
 =
 \frac{2}{n^2\mathcal L_n}
 \left(1+O\!\left(\frac1{\mathcal L_n}\right)\right).
\end{equation}
In particular, $\mathcal D_n>0$ for all sufficiently large $n$.
\end{lemma}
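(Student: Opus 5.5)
The proof has two parts: the exact algebra of the Laguerre family, which needs no asymptotics, and the asymptotic for $\mathcal D_n$, which comes from a finite-difference expansion of $h=\log M_z$ built on Lemma~\ref{lem:signed-saddle}.

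\emph{Algebra of the Laguerre family.} From \eqref{eq:Laguerre-model}, $\mathsf L_{k+1}/\mathsf L_k=S/(B+k)$, so
\[
q_k^{(\mathrm L)}=\frac{B+k+1}{B+k}.
\]
In particular $q_0^{(\mathrm L)}=(B+1)/B$. The choice $B_{\mathrm L}=(q_0-1)^{-1}$ is the unique solution of $q_0^{(\mathrm L)}=q_0$, and it requires $q_0>1$. Then $S_{\mathrm L}=B_{\mathrm L}R_1$ gives $R_1^{(\mathrm L)}=R_1$. Lemma~\ref{lem:quotient-coordinates} with $m=2$ then yields $R_2^{(\mathrm L)}=R_2$. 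Next,
\[
q_1^{(\mathrm L)}=\frac{B_{\mathrm L}+2}{B_{\mathrm L}+1},
\]
and substituting $B_{\mathrm L}=1/(q_0-1)$ gives $2-1/q_0$. Since $R_1,R_2$ agree, $q_1=R_2^2/(R_1R_3)$ and $q_1^{(\mathrm L)}$ share a numerator, so $R_3/R_3^{(\mathrm L)}=q_1^{(\mathrm L)}/q_1$. This proves the first equality in \eqref{eq:Dn-asymptotic}.

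\emph{Quotients in terms of $h$.} By \eqref{eq:gamma-h},
\[
\log\gamma(z)=h(z)-z\log4-\log\Gamma\!\left(z+\tfrac12\right)+\text{const}.
\]
The linear term cancels in second differences, so
\[
\log q_k=-\Delta^2\log\gamma(n+k)=-\Delta^2h(n+k)+\Delta^2\log\Gamma\!\left(n+k+\tfrac12\right).
\]
The Gamma contribution is exact: $\Delta^2\log\Gamma(x)=\log\frac{x+1}{x}$ at $x=n+k+\tfrac12$. This gives
\[
q_k=\frac{n+k+\frac32}{n+k+\frac12}\,e^{-\Delta^2h(n+k)}.
\]
For $q_0>1$: by \eqref{eq:hsecond-asymptotic} and the mean value theorem, $\Delta^2h(n)\approx h''(n)\approx 2/(n\mathcal L_n)$. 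Since $\mathcal L_n\to\infty$, this is $o(1/n)$, while the Gamma factor is $1+1/n+O(n^{-2})$. Hence $q_0-1\sim 1/n$, so $q_0>1$ and $B_{\mathrm L}\approx n$.

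\emph{The defect.} The main step is to expand
\[
\mathcal D_n=\log(2-1/q_0)-\log q_1
\]
to relative precision $1/\mathcal L_n$ at scale $n^{-2}\mathcal L_n^{-1}$. Write $q_0=e^{u}$, $q_1=e^{v}$ with
\[
u=g(n)-\Delta^2h(n),\qquad v=g(n+1)-\Delta^2h(n+1),\qquad g(x)=\log\frac{x+3/2}{x+1/2}.
\]
From $\log(2-e^{-u})=u-u^2+O(u^3)$ we get
\[
\mathcal D_n=(u-v)-u^2+O(n^{-3}).
\]
\emph{Gamma part.} Here $g(n)-g(n+1)=-\Delta g(n)$ and $g(n)^2$ cancel to $O(n^{-3})$: Laguerre with $B\approx n+\tfrac12$ is exact for the pure Gamma factor, which is the beta--Sonine heuristic of \eqref{eq:sonine}. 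I would verify this directly; $g(n)=1/(n+1/2)+O(n^{-3})$ makes the cancellation $O(n^{-3})$ up to bounded corrections.

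\emph{Moment part.} The terms linear in $h$ give $\Delta^3h(n)\approx h'''(n)$. The cross term is $2g(n)\Delta^2h(n)\approx 2\cdot\frac1n\cdot\frac{2}{n\mathcal L_n}$. The square $(\Delta^2 h)^2=O(n^{-2}\mathcal L_n^{-2})$ is absorbed in the error. Combining with \eqref{eq:hthird-asymptotic},
\[
\mathcal D_n=-\frac{2}{n^2\mathcal L_n}+\frac{4}{n^2\mathcal L_n}+O\!\left(\frac{1}{n^2\mathcal L_n^2}\right)=\frac{2}{n^2\mathcal L_n}\left(1+O(\mathcal L_n^{-1})\right).
\]
The $O(n^{-3})$ terms fit into this error since $\mathcal L_n\asymp\log n$. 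I expect this bookkeeping to be the main obstacle: the sign comes from a cancellation between the cross term and $h'''$, so every replacement of a difference by a derivative must keep relative error $O(\mathcal L_n^{-1})$. I would handle it with Taylor expansions with remainder, using \eqref{eq:hfourth} to bound $\Delta^2h-h''-h'''$ and $\Delta^3h-h'''$ by $O(n^{-3}/\log n)$. One also needs $\mathcal L_{n+1}/\mathcal L_n=1+O(1/(n\log n))$ from $L_N'=L_N/Q_N$. Positivity of $\mathcal D_n$ then follows for large $n$.
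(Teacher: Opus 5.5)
Your proposal is correct and follows essentially the paper's argument: the same formula $\log q_k=\log\frac{n+k+3/2}{n+k+1/2}-\Delta^2h(n+k)$, the same inputs from Lemma~\ref{lem:signed-saddle}, and the same balance of $\Delta^3h(n)\approx-2/(n^2\mathcal L_n)$ against the cross term $\approx4/(n^2\mathcal L_n)$. The paper expands $\log(2-e^{-x})$ about $x=\log\frac{n+3/2}{n+1/2}$ rather than about $0$, so your separate Gamma-part cancellation is replaced by the exact identity $\log\bigl(2-\frac{n+1/2}{n+3/2}\bigr)=\log\frac{n+5/2}{n+3/2}$. One small slip: $g(n)=1/(n+1)+O(n^{-3})$, not $1/(n+1/2)+O(n^{-3})$; the $O(n^{-3})$ cancellation you need is still true and follows from that exact identity.
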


\begin{proof}
Put $b=n+\frac12$ and
\[
 \ell(x):=\log(1+x^{-1}),\qquad
 \delta_0:=h(n+2)-2h(n+1)+h(n),
\]
\[
 \delta_1:=h(n+3)-2h(n+2)+h(n+1).
\]
Equation \eqref{eq:gamma-h} gives
\begin{equation}\label{eq:q01-delta}
 \log q_0=\ell(b)-\delta_0,\qquad
 \log q_1=\ell(b+1)-\delta_1.
\end{equation}
Lemma~\ref{lem:signed-saddle} and the integral formulas for finite
differences give
\begin{align}
 \delta_0
 &=\frac{2}{n\mathcal L_n}
   \left(1+O(\mathcal L_n^{-1})\right),\label{eq:delta0}\\
 \delta_1-\delta_0
 &=-\frac{2}{n^2\mathcal L_n}
   \left(1+O(\mathcal L_n^{-1})\right).
\label{eq:delta10}
\end{align}
Since $\ell(b)\sim n^{-1}$, equations
\eqref{eq:q01-delta} and \eqref{eq:delta0} show that $q_0>1$ for
all sufficiently large $n$.  Direct substitution in
\eqref{eq:Laguerre-model} gives
$R_1^{(\mathrm L)}=R_1$, $R_2^{(\mathrm L)}=R_2$, and the displayed
formula for $q_1^{(\mathrm L)}$.

Define $\mathcal F(x):=\log(2-e^{-x})$.  The exact relation
$q_1^{(\mathrm L)}=2-q_0^{-1}$ and
$\mathcal F(\ell(b))=\ell(b+1)$ imply
\begin{align*}
 \mathcal D_n
 &=
 \mathcal F(\ell(b)-\delta_0)-\ell(b+1)+\delta_1\\
 &=
 \delta_1-\frac{b}{b+2}\delta_0+O(\delta_0^2)\\
 &=
 (\delta_1-\delta_0)+\frac{2}{b+2}\delta_0
 +O(\delta_0^2).
\end{align*}
Substitution of \eqref{eq:delta0} and \eqref{eq:delta10} gives
\eqref{eq:Dn-asymptotic}.
\end{proof}

\section{First correction: the three-coefficient Jacobi model}

We now absorb the positive $R_3$ defect in
Lemma~\ref{lem:third-defect}.  For the Jacobi family
\eqref{eq:Jacobi-model-family}, the first two quotient invariants are
\begin{align}
 q_0^{(\mathrm J)}(A,B)
 &:=
 \frac{A(B+1)}{B(A+1)},\label{eq:q0-model}\\
 q_1^{(\mathrm J)}(A,B)
 &:=
 \frac{(A+1)(B+2)}{(B+1)(A+2)}.
\label{eq:q1-model}
\end{align}
They satisfy
\begin{equation}\label{eq:Jacobi-q-relation}
 \frac{q_0^{(\mathrm J)}-1}{q_1^{(\mathrm J)}-1}
 =q_0^{(\mathrm J)}\left(1+\frac2A\right).
\end{equation}
Consequently, solving
$q_0^{(\mathrm J)}=q_0$ and $q_1^{(\mathrm J)}=q_1$ gives the
following formulas.  Put
\begin{equation}\label{eq:T-def}
 T:=\frac{q_0-1}{q_1-1}.
\end{equation}
For all sufficiently large $n$, define
\begin{equation}\label{eq:Jacobi-parameters}
 A_{\mathrm J}:=\frac{2q_0}{T-q_0},\qquad
 \frac1{B_{\mathrm J}}
 :=q_0\left(1+\frac1{A_{\mathrm J}}\right)-1,
 \qquad
 S_{\mathrm J}:=B_{\mathrm J}R_1.
\end{equation}

\begin{lemma}[Positive Jacobi parameters]\label{lem:Jacobi-parameters}
The parameters in \eqref{eq:Jacobi-parameters} are positive and
satisfy
\begin{equation}\label{eq:AB-asymptotic}
 A_{\mathrm J}\sim n\mathcal L_n,\qquad
 B_{\mathrm J}\sim n,\qquad
 B_{\mathrm J}-\left(n+\frac12\right)
 =O\!\left(\frac n{\log n}\right).
\end{equation}
\end{lemma}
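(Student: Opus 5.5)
The plan is to express everything through the Laguerre-model identity already established in Lemma~\ref{lem:third-defect}, so that the denominator $T-q_0$ in \eqref{eq:Jacobi-parameters} is controlled directly by the defect $\mathcal D_n$. The point is that the Laguerre family is the $A\to\infty$ limit of the Jacobi family. Indeed, \eqref{eq:Jacobi-q-relation} at $A=\infty$ reads $(q_0-1)/(q_1^{(\mathrm L)}-1)=q_0$, which is just the identity $q_1^{(\mathrm L)}-1=1-q_0^{-1}$ from Lemma~\ref{lem:third-defect}. Subtracting this from the definition \eqref{eq:T-def} gives the exact formula
\[
 T-q_0=(q_0-1)\,\frac{q_1^{(\mathrm L)}-q_1}{(q_1-1)(q_1^{(\mathrm L)}-1)},
 \qquad
 q_1^{(\mathrm L)}-q_1=q_1\bigl(e^{\mathcal D_n}-1\bigr).
\]

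\emph{Step 1: first-order sizes of $q_0-1$ and $q_1-1$.} From \eqref{eq:q01-delta}, \eqref{eq:delta0} and \eqref{eq:delta10}, together with $\ell(b)=b^{-1}+O(n^{-2})$, we get
\[
 q_0-1=\frac1b-\frac{2}{n\mathcal L_n}\bigl(1+O(\mathcal L_n^{-1})\bigr),
\]
and $q_1-1$ has the same form up to $O(n^{-2})$. Both are therefore $\sim 1/n$ and positive for large $n$. Here I use $\mathcal L_n\sim\log n$, which follows from \eqref{eq:L-def} because $L_xe^{L_x}\sim x/\pi$. The same holds for $q_1^{(\mathrm L)}-1=(q_0-1)/q_0$.

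\emph{Step 2: the denominator $T-q_0$.} By \eqref{eq:Dn-asymptotic}, $e^{\mathcal D_n}-1=\mathcal D_n(1+O(n^{-2}))$. Inserting this and Step~1 into the exact formula for $T-q_0$ gives
\[
 T-q_0=n\,\mathcal D_n\bigl(1+O(\mathcal L_n^{-1})\bigr)
 =\frac{2}{n\mathcal L_n}\bigl(1+O(\mathcal L_n^{-1})\bigr)>0.
\]
Hence $A_{\mathrm J}=2q_0/(T-q_0)\sim n\mathcal L_n$, and $A_{\mathrm J}$ is positive.

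\emph{Step 3: the parameter $B_{\mathrm J}$.} Since $q_0/A_{\mathrm J}=(T-q_0)/2$, we have
\[
 \frac1{B_{\mathrm J}}=(q_0-1)+\frac{T-q_0}{2}
 =\frac1b-\frac{1}{n\mathcal L_n}+O\!\left(\frac1{n\mathcal L_n^2}\right).
\]
This is positive and $\sim 1/n$. Inverting gives
\[
 B_{\mathrm J}=b\bigl(1+O(\mathcal L_n^{-1})\bigr),
 \qquad\text{so}\qquad
 B_{\mathrm J}-b=O\!\left(\frac n{\log n}\right).
\]
Finally $S_{\mathrm J}=B_{\mathrm J}R_1>0$ because $R_1>0$.

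\emph{Main obstacle.} The delicate step is Step~2. The quantity $T-q_0$ is a difference of two terms of size $1$ and is only of size $1/(n\log n)$, so computing it naively from \eqref{eq:delta0} alone would lose the sign. The exact reduction to $\mathcal D_n$ is what avoids this cancellation. After that, one only needs to check that every error term is relatively $O(\mathcal L_n^{-1})$. This works because the $O(\delta_0^2)$ and $O(n^{-2})$ errors are $O(n^{-2})$ and therefore negligible against $1/(n\mathcal L_n)$ and, after multiplying by $n$, against $\mathcal D_n$.
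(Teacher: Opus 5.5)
Your proof is correct and follows the paper's route. Like the paper, you use the exact identity expressing $T-q_0$ through the Laguerre defect $q_1^{(\mathrm L)}-q_1$, apply the asymptotic for $\mathcal D_n$ from Lemma~\ref{lem:third-defect}, and then invert the formula for $1/B_{\mathrm J}$. Your shortcut $q_0/A_{\mathrm J}=(T-q_0)/2$ in Step~3 also keeps the second-order terms, so you obtain the refined scales of Lemma~\ref{lem:Jacobi-refined} at the same time, whereas the paper here only tracks $q_0-1=b^{-1}+O(1/(n\log n))$.
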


\begin{proof}
Let $q_1^{(\mathrm L)}=2-q_0^{-1}$.  Since
\eqref{eq:q01-delta}, \eqref{eq:delta0}, and \eqref{eq:delta10} give
$\log q_1\sim n^{-1}$, we have $q_1>1$.  On the other hand,
$\mathcal D_n=\log(q_1^{(\mathrm L)}/q_1)>0$ gives
$q_1<q_1^{(\mathrm L)}$.  The identity
\[
 q_1^{(\mathrm L)}-1=\frac{q_0-1}{q_0}
\]
then gives $T>q_0$.  Thus $A_{\mathrm J}>0$, and the remaining two
parameters in \eqref{eq:Jacobi-parameters} are positive.

For later use, observe that
\begin{equation}\label{eq:T-minus-q}
 T-q_0
 =
 \frac{(q_0-1)(q_1^{(\mathrm L)}-q_1)}
 {(q_1-1)(q_1^{(\mathrm L)}-1)}.
\end{equation}
Equations \eqref{eq:q01-delta} and \eqref{eq:delta0} imply
$q_0-1\sim n^{-1}$.  Lemma~\ref{lem:third-defect} gives
\[
 q_1^{(\mathrm L)}-q_1
 =q_1^{(\mathrm L)}(1-e^{-\mathcal D_n})
 \sim\frac{2}{n^2\mathcal L_n}.
\]
Both $q_1-1$ and $q_1^{(\mathrm L)}-1$ are asymptotic to $n^{-1}$.
Hence \eqref{eq:T-minus-q} gives
\[
 T-q_0\sim\frac{2}{n\mathcal L_n},
\]
and therefore $A_{\mathrm J}\sim n\mathcal L_n$.

The second equation in \eqref{eq:Jacobi-parameters}, together with
$q_0-1\sim n^{-1}$ and $A_{\mathrm J}\asymp n\log n$, gives
$B_{\mathrm J}\sim n$.  More precisely,
\eqref{eq:q01-delta} shows
\[
 q_0-1=\frac1{n+\frac12}
 +O\!\left(\frac1{n\log n}\right).
\]
It follows that
\[
 \frac1{B_{\mathrm J}}
 =\frac1{n+\frac12}
 +O\!\left(\frac1{n\log n}\right),
\]
which proves the last estimate in \eqref{eq:AB-asymptotic}.
\end{proof}

\begin{lemma}[Refined Jacobi scales]\label{lem:Jacobi-refined}
As $n\to\infty$,
\begin{align}
 A_{\mathrm J}
 &=n\mathcal L_n
   \left(1+O(\mathcal L_n^{-1})\right),\label{eq:AJ-refined}\\
 B_{\mathrm J}
 &=n+\frac{n}{\mathcal L_n}
   \left(1+O(\mathcal L_n^{-1})\right).
\label{eq:BJ-refined}
\end{align}
Consequently, with $b=n+\frac12$,
\[
 B_{\mathrm J}-b
 =\frac{n}{\mathcal L_n}
  \left(1+O(\mathcal L_n^{-1})\right).
\]
\end{lemma}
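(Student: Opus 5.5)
We refine the two asymptotics from the proof of Lemma~\ref{lem:Jacobi-parameters} by tracking relative errors $O(\mathcal L_n^{-1})$ rather than just leading terms. For $A_{\mathrm J}$, we start from $A_{\mathrm J}=2q_0/(T-q_0)$ and the exact identity \eqref{eq:T-minus-q}. Each of the four factors there gets a relative-error expansion. From \eqref{eq:q01-delta} and \eqref{eq:delta0}, $q_0-1=\ell(b)-\delta_0+O(n^{-2})=n^{-1}(1+O(\mathcal L_n^{-1}))$, because $\delta_0\asymp 1/(n\mathcal L_n)$. The same argument, with $\delta_1=\delta_0+O(n^{-2}\mathcal L_n^{-1})$ from \eqref{eq:delta10}, gives $q_1-1=n^{-1}(1+O(\mathcal L_n^{-1}))$. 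The identity $q_1^{(\mathrm L)}-1=(q_0-1)/q_0$ gives the same for $q_1^{(\mathrm L)}-1$.

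The crucial factor is $q_1^{(\mathrm L)}-q_1=q_1^{(\mathrm L)}(1-e^{-\mathcal D_n})$. Lemma~\ref{lem:third-defect} gives $\mathcal D_n$ with a relative error already of order $\mathcal L_n^{-1}$. Since $\mathcal D_n=O(n^{-2})$ and $q_1^{(\mathrm L)}=1+O(n^{-1})$, we get $q_1^{(\mathrm L)}-q_1=\frac{2}{n^2\mathcal L_n}(1+O(\mathcal L_n^{-1}))$. Multiplying out \eqref{eq:T-minus-q} then yields $T-q_0=\frac{2}{n\mathcal L_n}(1+O(\mathcal L_n^{-1}))$. Since $q_0=1+O(n^{-1})$, this gives \eqref{eq:AJ-refined}.

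For $B_{\mathrm J}$, we use the exact formula $1/B_{\mathrm J}=(q_0-1)+q_0/A_{\mathrm J}$ and need the first correction beyond $1/b$ explicitly. Expanding, $q_0-1=e^{\ell(b)-\delta_0}-1=\frac1b-\delta_0+O(n^{-2})$. From \eqref{eq:delta0}, $\delta_0=\frac{2}{n\mathcal L_n}(1+O(\mathcal L_n^{-1}))$, and from \eqref{eq:AJ-refined}, $q_0/A_{\mathrm J}=\frac{1}{n\mathcal L_n}(1+O(\mathcal L_n^{-1}))$. Hence
\[
 \frac1{B_{\mathrm J}}=\frac1b-\frac{1}{n\mathcal L_n}\bigl(1+O(\mathcal L_n^{-1})\bigr),
\]
where the $O(n^{-2})$ term is absorbed because $n^{-2}\ll n^{-1}\mathcal L_n^{-2}$. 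Inverting gives $B_{\mathrm J}=b+\frac{b^2}{n\mathcal L_n}(1+O(\mathcal L_n^{-1}))=b+\frac{n}{\mathcal L_n}(1+O(\mathcal L_n^{-1}))$. The shift $b-n=\tfrac12$ is $O(n/\mathcal L_n^2)$, so both \eqref{eq:BJ-refined} and the stated form of $B_{\mathrm J}-b$ follow.

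The main obstacle is the bookkeeping of the $O(\mathcal L_n^{-1})$ relative errors. In the $B_{\mathrm J}$ computation, the leading terms $2/(n\mathcal L_n)$ and $1/(n\mathcal L_n)$ partially cancel. So we must check that their relative errors, which come from \eqref{eq:delta0} and \eqref{eq:AJ-refined}, are genuinely $O(\mathcal L_n^{-1})$ and not merely $o(1)$; otherwise the net coefficient $-1$ would not be determined. Since $\mathcal L_n\asymp\log n$, every $O(n^{-2})$ contribution from $\ell$ and from the $e^x-1$ expansion is negligible at this precision.
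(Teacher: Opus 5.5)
Your proposal is correct and follows the paper's own proof. You obtain $T-q_0=\frac{2}{n\mathcal L_n}\bigl(1+O(\mathcal L_n^{-1})\bigr)$ from \eqref{eq:T-minus-q}, spelling out factor by factor the bookkeeping the paper compresses into ``one further term absorbed''. You then expand $1/B_{\mathrm J}=(q_0-1)+q_0/A_{\mathrm J}$ to $1/B_{\mathrm J}=1/b-\frac{1}{n\mathcal L_n}\bigl(1+O(\mathcal L_n^{-1})\bigr)$ and invert. Your $1/b$ in place of the paper's $1/n$ is harmless, since the difference is $O(n^{-2})$.
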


\begin{proof}
Keep the notation in the proof of
Lemma~\ref{lem:Jacobi-parameters}.  Equations
\eqref{eq:delta0}, \eqref{eq:delta10}, and
\eqref{eq:Dn-asymptotic}, with one further term absorbed in their
relative $O(\mathcal L_n^{-1})$ errors, give
\[
 T-q_0
 =\frac{2}{n\mathcal L_n}
  \left(1+O(\mathcal L_n^{-1})\right).
\]
The first formula in \eqref{eq:Jacobi-parameters} proves
\eqref{eq:AJ-refined}.

Also,
\[
 q_0
 =\left(1+\frac1b\right)e^{-\delta_0},
\]
and hence
\[
 q_0-1
 =\frac1n-\frac{2}{n\mathcal L_n}
  +O\!\left(\frac1{n\mathcal L_n^2}+\frac1{n^2}\right).
\]
Using \eqref{eq:AJ-refined} in the second formula of
\eqref{eq:Jacobi-parameters} yields
\[
 \frac1{B_{\mathrm J}}
 =\frac1n-\frac1{n\mathcal L_n}
  +O\!\left(\frac1{n\mathcal L_n^2}+\frac1{n^2}\right).
\]
Inversion proves \eqref{eq:BJ-refined}.
\end{proof}

Define the Jacobi model ratios
\begin{equation}\label{eq:Jacobi-model-ratios}
 R_j^{(\mathrm J)}
 :=\mathsf J_j(A_{\mathrm J},B_{\mathrm J},S_{\mathrm J}).
\end{equation}

\begin{lemma}[Exact matching through $R_3$]\label{lem:match-three}
For $j=0,1,2,3$,
\[
 R_j^{(\mathrm J)}=R_j.
\]
\end{lemma}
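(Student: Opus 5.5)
By Lemma~\ref{lem:quotient-coordinates} with $m=3$, it suffices to check three things: $R_0^{(\mathrm J)}=1$, $R_1^{(\mathrm J)}=R_1$, and the two quotient equations $q_0^{(\mathrm J)}=q_0$ and $q_1^{(\mathrm J)}=q_1$. The first is immediate, since $\mathsf J_0=1$.

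For the second, \eqref{eq:Jacobi-model-family} gives $R_1^{(\mathrm J)}=S_{\mathrm J}A_{\mathrm J}/(A_{\mathrm J}B_{\mathrm J})=S_{\mathrm J}/B_{\mathrm J}$. This equals $R_1$ by the choice $S_{\mathrm J}=B_{\mathrm J}R_1$ in \eqref{eq:Jacobi-parameters}. Because the quotient invariants do not depend on $S$, the quotients of the model are exactly \eqref{eq:q0-model} and \eqref{eq:q1-model}, evaluated at $(A_{\mathrm J},B_{\mathrm J})$. Lemma~\ref{lem:Jacobi-parameters} guarantees that all parameters are positive, so every Pochhammer symbol is nonzero and these expressions are well defined.

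It remains to verify the two quotient equations.

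\emph{The equation $q_0^{(\mathrm J)}=q_0$.} Rewrite \eqref{eq:q0-model} as
\[
 q_0^{(\mathrm J)}=\frac{1+1/B}{1+1/A}.
\]
Hence $q_0^{(\mathrm J)}=q_0$ is equivalent to $1/B=q_0(1+1/A)-1$. This is precisely the definition of $B_{\mathrm J}$ in \eqref{eq:Jacobi-parameters}.

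\emph{The equation $q_1^{(\mathrm J)}=q_1$.} By the identity \eqref{eq:Jacobi-q-relation}, once $q_0^{(\mathrm J)}=q_0$ holds we have
\[
 \frac{q_0-1}{q_1^{(\mathrm J)}-1}=q_0\left(1+\frac2{A_{\mathrm J}}\right).
\]
The definition $A_{\mathrm J}=2q_0/(T-q_0)$ gives $q_0(1+2/A_{\mathrm J})=T=(q_0-1)/(q_1-1)$. Since $q_0>1$ by Lemma~\ref{lem:third-defect}, we may cancel $q_0-1$ and obtain $q_1^{(\mathrm J)}-1=q_1-1$.

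The only step that needs real checking is the identity \eqref{eq:Jacobi-q-relation} itself, which is a short rational computation. Direct simplification gives
\[
 q_0^{(\mathrm J)}-1=\frac{A-B}{B(A+1)},\qquad
 q_1^{(\mathrm J)}-1=\frac{A-B}{(B+1)(A+2)}.
\]
Their ratio is $(B+1)(A+2)/\bigl(B(A+1)\bigr)$, which equals $q_0^{(\mathrm J)}(1+2/A)$.

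This computation requires $A\neq B$, so that the ratio is defined. That holds for large $n$, because $A_{\mathrm J}\sim n\mathcal L_n$ while $B_{\mathrm J}\sim n$ by Lemma~\ref{lem:Jacobi-parameters}. The same fact gives $q_1^{(\mathrm J)}\neq1$, so the cancellation above is legitimate.

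Applying Lemma~\ref{lem:quotient-coordinates} with $m=3$ then yields $R_j^{(\mathrm J)}=R_j$ for $j=0,1,2,3$.
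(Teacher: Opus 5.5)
Your proof is correct and follows the paper's own argument: $S_{\mathrm J}=B_{\mathrm J}R_1$ matches $R_1$, the definitions in \eqref{eq:Jacobi-parameters} together with \eqref{eq:Jacobi-q-relation} match $q_0$ and $q_1$, and Lemma~\ref{lem:quotient-coordinates} with $m=3$ finishes. Your explicit check of \eqref{eq:Jacobi-q-relation} supplies a computation the paper leaves unstated. Also, you do not need asymptotics to get $A_{\mathrm J}\neq B_{\mathrm J}$: it follows directly from $(A_{\mathrm J}-B_{\mathrm J})/\bigl(B_{\mathrm J}(A_{\mathrm J}+1)\bigr)=q_0-1>0$.
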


\begin{proof}
The choice $S_{\mathrm J}=B_{\mathrm J}R_1$ matches $R_1$.
Equations \eqref{eq:Jacobi-parameters} and
\eqref{eq:Jacobi-q-relation} match $q_0$ and $q_1$.
Lemma~\ref{lem:quotient-coordinates} now gives the assertion.
\end{proof}

To determine whether a second positive Jacobi factor can correct
$R_4$, continue the logarithm of the ratio between the actual
coefficients and the first Jacobi model.  For $z$ in a fixed
neighborhood of $\{0,1,2,3,4\}$, define
\begin{align}
 E_{\mathrm J}(z)
 &:=
 \log\gamma(n+z)-\log\gamma(n)-z\log S_{\mathrm J}
 +z\log A_{\mathrm J}\notag\\
 &\quad
 +\log\Gamma(B_{\mathrm J}+z)-\log\Gamma(B_{\mathrm J})
 -\log\Gamma(A_{\mathrm J}+z)+\log\Gamma(A_{\mathrm J}),
\label{eq:EJ-def}
\end{align}
where the branches are real on the positive axis.  We write
$\psi=\Gamma'/\Gamma$ and $\psi^{(m)}$ for its $m$th derivative.
For integer $j\geq0$,
\[
 E_{\mathrm J}(j)
 =\log\frac{R_j}{R_j^{(\mathrm J)}}.
\]
Lemma~\ref{lem:match-three} therefore gives
\begin{equation}\label{eq:EJ-zeros}
 E_{\mathrm J}(0)=E_{\mathrm J}(1)
 =E_{\mathrm J}(2)=E_{\mathrm J}(3)=0.
\end{equation}
Writing $b=n+\tfrac12$, four differentiations of
\eqref{eq:EJ-def} and \eqref{eq:gamma-h} give
\begin{align}
 E_{\mathrm J}^{(4)}(z)
 &=
 h^{(4)}(n+z)
 +\psi^{(3)}(B_{\mathrm J}+z)-\psi^{(3)}(b+z)\notag\\
 &\hspace{28mm}
 -\psi^{(3)}(A_{\mathrm J}+z).
\label{eq:EJ-fourth}
\end{align}

\begin{lemma}[Negative Jacobi defect at $R_4$]
\label{lem:signed-fourth-defect}
For fixed $z$ in a bounded set,
\begin{equation}\label{eq:EJ-fourth-signed}
 E_{\mathrm J}^{(4)}(z)
 =-\frac{2}{n^3\mathcal L_n}
  \left(1+O(\mathcal L_n^{-1})\right).
\end{equation}
In particular,
\begin{equation}\label{eq:fourth-defect}
 \mathcal Q_n:=-E_{\mathrm J}(4)
 =\frac{2}{n^3\mathcal L_n}
  \left(1+O(\mathcal L_n^{-1})\right)>0
\end{equation}
for all sufficiently large $n$.
\end{lemma}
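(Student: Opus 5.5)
The plan is to estimate the three terms of \eqref{eq:EJ-fourth} separately, using the asymptotics already established, and then pass from the fourth derivative to the value $E_{\mathrm J}(4)$ by exploiting the four zeros \eqref{eq:EJ-zeros}.

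\textbf{The moment term.} With $x=n+z$ and $z$ bounded, \eqref{eq:hfourth-asymptotic} gives
\[
 h^{(4)}(n+z)=\frac{4}{(n+z)^3\mathcal L_{n+z}}\bigl(1+O(\mathcal L_{n+z}^{-1})\bigr).
\]
Since $\mathcal L_x\sim\log x$, we have $\mathcal L_{n+z}=\mathcal L_n+O(1/n)$. Hence this equals $\frac{4}{n^3\mathcal L_n}(1+O(\mathcal L_n^{-1}))$.

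\textbf{The Laguerre index shift.} The key cancellation is in this term. Write
\[
 \psi^{(3)}(B_{\mathrm J}+z)-\psi^{(3)}(b+z)
 =\int_{b}^{B_{\mathrm J}}\psi^{(4)}(t+z)\dd t .
\]
Use $\psi^{(4)}(t)=-6t^{-4}(1+O(t^{-1}))$, which is uniformly valid on the interval because both endpoints are $\sim n$. By Lemma~\ref{lem:Jacobi-refined}, $B_{\mathrm J}-b=\frac{n}{\mathcal L_n}(1+O(\mathcal L_n^{-1}))$. Since $(B_{\mathrm J}-b)/n=O(1/\mathcal L_n)$, replacing $t^{-4}$ by $n^{-4}$ costs only a relative $O(\mathcal L_n^{-1})$. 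The integral therefore equals
\[
 -\frac{6}{n^3\mathcal L_n}\bigl(1+O(\mathcal L_n^{-1})\bigr).
\]

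\textbf{The Jacobi parameter term.} By Lemma~\ref{lem:Jacobi-refined}, $A_{\mathrm J}=n\mathcal L_n(1+O(\mathcal L_n^{-1}))$. Then $\psi^{(3)}(A_{\mathrm J}+z)=2A_{\mathrm J}^{-3}(1+O(A_{\mathrm J}^{-1}))=O(n^{-3}\mathcal L_n^{-3})$, which falls inside the error.

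Summing the three terms gives $(4-6)/(n^3\mathcal L_n)$ with relative error $O(\mathcal L_n^{-1})$, which is \eqref{eq:EJ-fourth-signed}. This is the main obstacle: the leading terms $4$ and $-6$ do not cancel, but the relative $O(\mathcal L_n^{-1})$ errors must be tracked so that they stay genuinely relative. The precise asymptotic for $B_{\mathrm J}-b$, and not only its order, is essential here, which is exactly what Lemma~\ref{lem:Jacobi-refined} supplies. Uniformity in bounded $z$ follows from the uniform bounds in Lemma~\ref{lem:signed-saddle}. If needed, Cauchy estimates on \eqref{eq:sectorial-remainder} can be used at complex $z$ near the real segment.

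\textbf{From the fourth derivative to $E_{\mathrm J}(4)$.} By \eqref{eq:EJ-zeros}, $E_{\mathrm J}$ vanishes at $0,1,2,3$. The Newton/Hermite–Genocchi representation then gives
\[
 E_{\mathrm J}(4)=\Delta^4E_{\mathrm J}(0)=\int_{[0,1]^4}E_{\mathrm J}^{(4)}(t_1+\cdots+t_4)\dd t ,
\]
since every lower difference vanishes. Alternatively, the interpolation remainder gives $E_{\mathrm J}(4)=\frac{4!}{4!}E_{\mathrm J}^{(4)}(\xi)$ for some $\xi\in(0,4)$. Either way, the uniform estimate \eqref{eq:EJ-fourth-signed} on $[0,4]$ yields
\[
 -E_{\mathrm J}(4)=\frac{2}{n^3\mathcal L_n}\bigl(1+O(\mathcal L_n^{-1})\bigr),
\]
and this is positive for all sufficiently large $n$.
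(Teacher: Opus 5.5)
Your proposal is correct and follows the paper's proof essentially step for step. You use the same three-term split of \eqref{eq:EJ-fourth}, with contributions $4/(n^3\mathcal L_n)$, $-6/(n^3\mathcal L_n)$, and $O(n^{-3}\mathcal L_n^{-3})$, and then pass to $E_{\mathrm J}(4)$ through the Hermite--Genocchi (fourth forward difference) representation using the zeros \eqref{eq:EJ-zeros}. The only cosmetic difference is that you write the index-shift term as $\int_b^{B_{\mathrm J}}\psi^{(4)}(t+z)\dd t$, whereas the paper uses a first-order mean-value expansion with a $\psi^{(5)}$ remainder.
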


\begin{proof}
Equation \eqref{eq:EJ-fourth}, together with
\eqref{eq:hfourth-asymptotic}, gives
\[
 h^{(4)}(n+z)
 =\frac4{n^3\mathcal L_n}
  \left(1+O(\mathcal L_n^{-1})\right)
\]
uniformly for bounded $z$.  By Lemma~\ref{lem:Jacobi-refined} and
the mean-value formula,
\begin{align*}
 \psi^{(3)}(B_{\mathrm J}+z)-\psi^{(3)}(b+z)
 &=(B_{\mathrm J}-b)\psi^{(4)}(b+z)
   +O\!\left((B_{\mathrm J}-b)^2
       \sup_{u\asymp n}|\psi^{(5)}(u)|\right)\\
 &=-\frac6{n^3\mathcal L_n}
   \left(1+O(\mathcal L_n^{-1})\right).
\end{align*}
Here we used
\[
 \psi^{(4)}(u)=-\frac6{u^4}+O(u^{-5}),
 \qquad
 \psi^{(5)}(u)=O(u^{-5}).
\]
Finally,
$\psi^{(3)}(A_{\mathrm J}+z)=O(n^{-3}\mathcal L_n^{-3})$.
This proves \eqref{eq:EJ-fourth-signed}.

Since $E_{\mathrm J}$ vanishes at $0,1,2,3$ by
\eqref{eq:EJ-zeros}, the
Hermite--Genocchi formula writes $E_{\mathrm J}(4)$ as the average
of $E_{\mathrm J}^{(4)}$ over the simplex spanned by
$0,1,2,3,4$.  The same uniform asymptotic therefore proves
\eqref{eq:fourth-defect}.
\end{proof}

\section{Second correction: the four-coefficient finite-free model}

We now use the sign in Lemma~\ref{lem:signed-fourth-defect} to
match $R_4$.  For $U,V>0$, define
\begin{equation}\label{eq:qUV}
 \mathfrak q_k(U,V)
 :=
 \frac{(U+k)(V+k+1)}
 {(V+k)(U+k+1)}.
\end{equation}
This is the $k$th quotient invariant of the Jacobi family
$\mathsf J_j(U,V,S)$.  Since
\[
 \mathsf F_j(A,B,C,D,S)
 =\mathsf J_j(A,B,S)\mathsf J_j(C,D,D),
\]
the $k$th quotient invariant of the finite-free family is
\[
 \mathfrak q_k(A,B)\mathfrak q_k(C,D).
\]
For the first Jacobi model,
\[
 q_k=\mathfrak q_k(A_{\mathrm J},B_{\mathrm J})
 \qquad(k=0,1).
\]
Moreover, because the Jacobi model agrees with $R_j$ for
$0\leq j\leq3$,
\begin{equation}\label{eq:q2-defect}
 \log\frac{q_2}
 {\mathfrak q_2(A_{\mathrm J},B_{\mathrm J})}
 =-E_{\mathrm J}(4)=\mathcal Q_n.
\end{equation}

Fix
\begin{equation}\label{eq:D-def}
 D:=\frac12B_{\mathrm J}.
\end{equation}
This convenient choice places the second Jacobi factor on the same
$n$-scale as the first and leaves three unknown parameters for the
three quotient equations.  By
Lemma~\ref{lem:quotient-coordinates}, matching through $R_4$
now reduces to finding $A,B,C>0$ satisfying
\begin{equation}\label{eq:four-ratio-system}
 \mathfrak q_k(A,B)\mathfrak q_k(C,D)=q_k,
 \qquad k=0,1,2.
\end{equation}

\begin{lemma}[Four-coefficient parameter matching]
\label{lem:four-ratio-parameters}
There is an absolute constant $K_{\mathrm r}\geq32$ such that, for
all sufficiently large $n$, system
\eqref{eq:four-ratio-system} has a real solution with
\begin{align}
 A&=\frac65A_{\mathrm J}
    \left(1+O(\mathcal L_n^{-1})\right),
\label{eq:A-free-asymptotic}\\
 B-B_{\mathrm J}
 &=\frac{n}{3\mathcal L_n}
   \left(1+O(\mathcal L_n^{-1})\right),
\label{eq:B-free-asymptotic}\\
 C-D
 &=\frac{n}{24\mathcal L_n}
   \left(1+O(\mathcal L_n^{-1})\right).
\label{eq:C-free-asymptotic}
\end{align}
If
\begin{equation}\label{eq:fifth-wedge-hypothesis}
 n^3\log^2(n+2)\geq K_4d^5
\end{equation}
with $K_4$ sufficiently large, then
\begin{equation}\label{eq:free-positive-range}
 A\geq8B,\quad B\geq K_{\mathrm r}d,\quad
 D\geq K_{\mathrm r}d,\quad
 4d\leq C-D\leq\frac14D.
\end{equation}
\end{lemma}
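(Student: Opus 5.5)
The plan is to solve \eqref{eq:four-ratio-system} perturbatively around the known near-solution $(A,B,C)=(A_{\mathrm J},B_{\mathrm J},D)$. At that point $\mathfrak q_k(D,D)=1$, so equations $k=0,1$ hold exactly and equation $k=2$ fails by exactly $\mathcal Q_n$, by \eqref{eq:q2-defect}. I take logarithms and write
\[
 \log\mathfrak q_k(U,V)=\phi(V+k)-\phi(U+k),\qquad \phi(x):=\log(1+x^{-1}).
\]
The system becomes three real equations $\Phi_k(A,B,C)=\log q_k$ for $k=0,1,2$. It is better to replace these by the equivalent triple consisting of the $k=0$ equation, the first difference in $k$, and the second difference in $k$. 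In those coordinates the right-hand side differs from its value at the base point only in the second-difference slot, where it is $\mathcal Q_n=\frac{2}{n^3\mathcal L_n}(1+O(\mathcal L_n^{-1}))$ by Lemma~\ref{lem:signed-fourth-defect}.

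Next I compute the linearization, using the scalings from Lemma~\ref{lem:Jacobi-refined}: $B_{\mathrm J}\sim n$, $D\sim n/2$, $A_{\mathrm J}\sim n\mathcal L_n$. A shift $\delta B$ changes $\phi(B+k)$ by about $-\delta B/(B+k)^2$. Expanding in $k$, this contributes roughly $-\delta B\,n^{-2}$ to the value slot, $2\delta B\,n^{-3}$ to the first-difference slot, and $-6\delta B\,n^{-4}$ to the second-difference slot. A shift $\delta C$ of $C$ away from $D\sim n/2$ contributes $+\delta C/(D+k)^2$, with the same pattern but weights $4,16,96$ (up to sign). A shift in $A$ contributes through $-\phi(A+k)$. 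Since $A\gg n$, its $k$-dependence is of lower order, so $A$ acts essentially as a free constant that absorbs the value slot.

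The resulting $3\times3$ system is triangular in the right sense. The first- and second-difference slots form a $2\times2$ system in $(\delta B,\delta C)$ with Vandermonde-type nonzero determinant, because the scales $B\sim n$ and $D\sim n/2$ are distinct. Solving it with second-difference datum $\mathcal Q_n$ and first-difference datum $0$ should give $\delta B\asymp n/\mathcal L_n$ and $\delta C\asymp n/\mathcal L_n$, and I expect the constants $\frac13$ and $\frac1{24}$ of \eqref{eq:B-free-asymptotic}--\eqref{eq:C-free-asymptotic}. The value slot then determines the shift in $1/A$. Since $1/A_{\mathrm J}\sim 1/(n\mathcal L_n)$ and the required correction is of comparable relative size, this yields $A=\frac65A_{\mathrm J}(1+O(\mathcal L_n^{-1}))$.

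To make this rigorous I would pass to scaled unknowns $\beta=(B-B_{\mathrm J})\mathcal L_n/n$, $\kappa=(C-D)\mathcal L_n/n$, $\alpha=A_{\mathrm J}/A$, and multiply the three slots by $n^2$, $n^3$, and $n^4\mathcal L_n$ respectively. The system then reads (invertible linear map)$+O(\mathcal L_n^{-1})=$ (fixed vector) on a fixed compact box. A contraction, or Brouwer argument, on that box produces a real solution with the stated relative $O(\mathcal L_n^{-1})$ errors.

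\textbf{Main obstacle.} The hard step is controlling these error terms. The defect $\mathcal Q_n$ is only of size $n^{-3}\mathcal L_n^{-1}$, while the individual terms $\phi(B+k)$ are of size $n^{-1}$. Every nonlinear remainder must therefore be shown to be smaller by a factor $\mathcal L_n^{-1}$ after the difference operators are applied. I would handle this by writing each difference exactly as an integral of $\phi''$ or $\phi'''$ via Hermite--Genocchi, as in the proof of Lemma~\ref{lem:signed-fourth-defect}, and using $\delta B,\delta C=O(n/\mathcal L_n)$.

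Finally, \eqref{eq:free-positive-range} follows from these asymptotics together with \eqref{eq:fifth-wedge-hypothesis}, which forces $d\ll n^{3/5}\log^{2/5}n=o(n/\log n)$:
\begin{itemize}
 \item $A\sim\frac65n\mathcal L_n\geq8B$ because $B\sim n$;
 \item $B\sim n$ and $D\sim n/2$ both exceed $K_{\mathrm r}d$;
 \item $C-D\sim n/(24\mathcal L_n)\geq4d$;
 \item $C-D\le D/4$, since $(C-D)/D\sim 1/(12\mathcal L_n)\to0$.
\end{itemize}
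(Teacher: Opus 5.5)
Your route is essentially the paper's. Both take logarithms of \eqref{eq:four-ratio-system} and replace the three equations by the value and the first two $k$-differences. Both use the scaled unknowns $a=A/A_{\mathrm J}$ (your $\alpha=1/a$), $b=(B-B_{\mathrm J})\mathcal L_n/n$, $c=(C-D)\mathcal L_n/n$. Both reach the limiting system $1-\alpha-b+4c=0$, $2b-16c=0$, $-6b+96c=2$, whose solution $(\alpha,b,c)=(\frac56,\frac13,\frac1{24})$ is obtained by a contraction on a fixed box.

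There is one bookkeeping slip. Once the unknowns are scaled by $n/\mathcal L_n$, the three slots are of size $(n\mathcal L_n)^{-1}$, $(n^2\mathcal L_n)^{-1}$, $(n^3\mathcal L_n)^{-1}$. The normalizing factors must therefore be $n\mathcal L_n$, $n^2\mathcal L_n$, $n^3\mathcal L_n$, as in the paper's $\mathcal G_n$. Your factors $n^2$, $n^3$, $n^4\mathcal L_n$ inflate the rows by $n/\mathcal L_n$, $n/\mathcal L_n$, $n$, so the system is not of the claimed form ``fixed invertible map $+O(\mathcal L_n^{-1})$'' until you divide these out. You should also fix the signs you left open: $C$ enters with weights $+4,-16,+96$, and exactly this pattern gives $b=8c$ and $48c=2$, hence the constants you only ``expect''.

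Your choice of $\alpha=A_{\mathrm J}/A$ as the unknown is a small genuine improvement. It makes the limiting map affine, so the contraction step needs only the $C^1$ closeness $\varepsilon_n=O(\mathcal L_n^{-1})$. The paper's Jacobian-variation bound \eqref{eq:limiting-Jacobian-variation} is then unnecessary.
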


\begin{proof}
Put
\[
 f_k(U):=\log\frac{U+k}{U+k+1}.
\]
After taking logarithms, \eqref{eq:four-ratio-system} becomes
\begin{equation}\label{eq:f-system}
 f_k(A)-f_k(B)+f_k(C)-f_k(D)=\log q_k.
\end{equation}
Set $L=\mathcal L_n$ and introduce scaled variables by
\[
 A=aA_{\mathrm J},\qquad
 B=B_{\mathrm J}+\frac{bn}{L},\qquad
 C=D+\frac{cn}{L}.
\]
The expansion
\begin{equation}\label{eq:fk-expansion}
 f_k(U)
 =-\frac1U+\frac{k+\frac12}{U^2}
  -\frac{k^2+k+\frac13}{U^3}
  +O(U^{-4})
\end{equation}
is uniform, together with its first derivatives in the scaled
variables, when $k=0,1,2$ and $(a,b,c)$ ranges over a fixed compact
subset of $(0,\infty)\times\R^2$.

To make the implicit-function step explicit, subtract the Jacobi
model and set
\[
 H_{n,k}(a,b,c)
 :=
 f_k(aA_{\mathrm J})
 -f_k\!\left(B_{\mathrm J}+\frac{bn}{L}\right)
 +f_k\!\left(D+\frac{cn}{L}\right)-f_k(D)
 -f_k(A_{\mathrm J})+f_k(B_{\mathrm J}).
\]
Equations \eqref{eq:f-system} and \eqref{eq:q2-defect} are
equivalent to
\[
 H_{n,0}=H_{n,1}=0,\qquad H_{n,2}=\mathcal Q_n.
\]
The value and the first two finite differences in $k$ occur at
successive powers of $n^{-1}$.  Introduce the correspondingly
rescaled map
\[
 \mathcal G_n(a,b,c)
 :=
 \left(
 nL H_{n,0},\
 n^2L(H_{n,1}-H_{n,0}),\
 n^3L(H_{n,2}-2H_{n,1}+H_{n,0})
 \right).
\]
By Lemma~\ref{lem:signed-fourth-defect}, the required target is
\[
 \left(0,0,n^3L\mathcal Q_n\right)
 =\left(0,0,2+O(L^{-1})\right).
\]
The expansion \eqref{eq:fk-expansion} and
Lemma~\ref{lem:Jacobi-refined} show, uniformly with first
derivatives on a neighborhood of the limiting solution, that
\[
 \mathcal G_n(a,b,c)
 =
 \left(
 1-a^{-1}-b+4c,\
 2b-16c,\
 -6b+96c
 \right)+O(L^{-1}).
\]
Thus the limiting equations are
\begin{align}
 1-a^{-1}-b+4c&=0,\label{eq:limit-system-0}\\
 2b-16c&=0,\label{eq:limit-system-1}\\
 -6b+96c&=2.\label{eq:limit-system-2}
\end{align}
The unique solution is
\[
 x_\ast:= (a_\ast,b_\ast,c_\ast)
 =\left(\frac65,\frac13,\frac1{24}\right).
\]

We now quantify the perturbation argument.  Let $\Phi_n$ be
$\mathcal G_n$ minus its required target and let $\Phi_\infty$ be
the vector of the three limiting left sides in
\eqref{eq:limit-system-0}--\eqref{eq:limit-system-2}.  Thus
\begin{align*}
 \Phi_n(a,b,c)
 &=\mathcal G_n(a,b,c)-\left(0,0,n^3L\mathcal Q_n\right),\\
 \Phi_\infty(a,b,c)
 &=\left(1-a^{-1}-b+4c,\ 2b-16c,\ -6b+96c-2\right).
\end{align*}
Write $\|\cdot\|_\infty$ for the maximum norm and its induced
matrix norm, and set
\[
 \mathcal B:=\left\{x\in\R^3:
 \|x-x_\ast\|_\infty\leq\frac3{20}\right\}.
\]
Define the actual $C^1$ error on this cube by
\begin{equation}\label{eq:matching-C1-error}
 \varepsilon_n
 :=\sup_{x\in\mathcal B}
 \max\left\{
  \|\Phi_n(x)-\Phi_\infty(x)\|_\infty,
  \|D\Phi_n(x)-D\Phi_\infty(x)\|_\infty
 \right\}.
\end{equation}
The uniform expansions above say precisely that
$\varepsilon_n=O(L^{-1})$.  At $x_\ast$ the limiting Jacobian and
its inverse are
\[
 J:=D\Phi_\infty(x_\ast)
 =\begin{pmatrix}
  25/36&-1&4\\
  0&2&-16\\
  0&-6&96
 \end{pmatrix},
 \qquad
 J^{-1}
 =\begin{pmatrix}
  36/25&27/25&3/25\\
  0&1&1/6\\
  0&1/16&1/48
 \end{pmatrix}.
\]
In particular, $\|J^{-1}\|_\infty=66/25$.  Moreover, if
$x=(a,b,c)\in\mathcal B$, then
\begin{equation}\label{eq:limiting-Jacobian-variation}
 \|I-J^{-1}D\Phi_\infty(x)\|_\infty
 =\frac{36}{25}\left|a^{-2}-\frac{25}{36}\right|
 \leq\frac{15}{49}.
\end{equation}

Take $n$ large enough that $\varepsilon_n\leq1/30$.  The map
\[
 \mathcal T_n(x):=x-J^{-1}\Phi_n(x)
\]
satisfies, by \eqref{eq:matching-C1-error} and
\eqref{eq:limiting-Jacobian-variation},
\[
 \sup_{x\in\mathcal B}\|D\mathcal T_n(x)\|_\infty
 \leq\frac{15}{49}+\frac{66}{25}\varepsilon_n
 \leq\frac{15}{49}+\frac{11}{125}<\frac25.
\]
Also
\[
 \|\mathcal T_n(x_\ast)-x_\ast\|_\infty
 \leq\frac{66}{25}\varepsilon_n\leq\frac{11}{125}.
\]
It follows that, for every $x\in\mathcal B$,
\[
 \|\mathcal T_n(x)-x_\ast\|_\infty
 \leq\frac{11}{125}+\frac25\frac3{20}
 =\frac{37}{250}<\frac3{20}.
\]
Thus $\mathcal T_n$ maps $\mathcal B$ into itself and is a
contraction.  Its unique fixed point $x_n$ solves the exact system,
and the contraction estimate gives
\begin{equation}\label{eq:matching-solution-error}
 \|x_n-x_\ast\|_\infty
 \leq
 \frac{(66/25)\varepsilon_n}{1-2/5}
 =\frac{22}{5}\varepsilon_n=O(L^{-1}).
\end{equation}
This proves \eqref{eq:A-free-asymptotic}--
\eqref{eq:C-free-asymptotic} without a qualitative
implicit-function argument.

Finally, \eqref{eq:fifth-wedge-hypothesis} gives
$d=O(n^{3/5}\log^{2/5}n)=o(n/\log n)$.  Since
$A/B\sim(6/5)\mathcal L_n$, we also have $A\geq8B$ beyond a fixed
threshold.
Equations \eqref{eq:A-free-asymptotic}--
\eqref{eq:C-free-asymptotic}, after increasing $K_4$ and excluding
a fixed finite range of $n$, imply
\eqref{eq:free-positive-range}.  The finite range is absorbed by
one further increase of the eventual absolute constant.
\end{proof}

Put
\begin{equation}\label{eq:S-free}
 S:=BR_1
\end{equation}
and define the model ratios
\begin{equation}\label{eq:free-model-ratios}
 R_j^{(\mathrm F)}
 :=\mathsf F_j(A,B,C,D,S).
\end{equation}
The identity $R_1^{(\mathrm F)}=S/B=R_1$, together with
\eqref{eq:four-ratio-system}, gives the following.

\begin{lemma}[Exact matching through $R_4$]\label{lem:match-four}
For $0\leq j\leq4$,
\[
 R_j^{(\mathrm F)}=R_j.
\]
\end{lemma}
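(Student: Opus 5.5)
The plan is to reduce the statement to Lemma~\ref{lem:quotient-coordinates}, applied with $m=4$ and with $\widetilde R_j:=R_j^{(\mathrm F)}$. That lemma needs three inputs: $\widetilde R_0=1$, $\widetilde R_1=R_1$, and
\[
 \frac{\widetilde R_{k+1}^2}{\widetilde R_k\widetilde R_{k+2}}=q_k\qquad(k=0,1,2).
\]
Once these are in hand, the lemma gives $R_j^{(\mathrm F)}=R_j$ for $0\le j\le4$, which is the statement.

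The first two inputs come straight from \eqref{eq:free-model-family}. At $j=0$ every Pochhammer symbol and power equals $1$, so $R_0^{(\mathrm F)}=1$. At $j=1$ we have $\rf{A}{1}=A$ and $\rf{C}{1}=C$, so the factors $A/A$ and $C/C$ cancel, the $D/D$ factor cancels as well, and
\[
 R_1^{(\mathrm F)}=\frac{S}{B}=\frac{BR_1}{B}=R_1
\]
by \eqref{eq:S-free}.

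For the quotient invariants, I would use the factorization
\[
 \mathsf F_j(A,B,C,D,S)=\mathsf J_j(A,B,S)\,\mathsf J_j(C,D,D).
\]
Quotient invariants are multiplicative under termwise products of ratio sequences, so the $k$th invariant of the product is the product of the two Jacobi invariants. I would check the Jacobi invariant directly: the powers $S^j/A^j$ cancel in $R_{k+1}^2/(R_kR_{k+2})$, and the Pochhammer ratios leave
\[
 \frac{(A+k)(B+k+1)}{(B+k)(A+k+1)}=\mathfrak q_k(A,B),
\]
as in \eqref{eq:qUV}. Hence the $k$th invariant of $R^{(\mathrm F)}$ is $\mathfrak q_k(A,B)\,\mathfrak q_k(C,D)$. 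By \eqref{eq:four-ratio-system}, with $A,B,C$ the solution from Lemma~\ref{lem:four-ratio-parameters} and $D=\tfrac12B_{\mathrm J}$, this equals $q_k$ for $k=0,1,2$.

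One point needs care: Lemma~\ref{lem:quotient-coordinates} divides by $\widetilde R_k$, so all the quantities involved must be positive. Lemma~\ref{lem:four-ratio-parameters} only asserts a real solution, but its asymptotics give $A\sim\frac65A_{\mathrm J}>0$, $B\sim n>0$, and $C>D>0$ for large $n$. Together with $R_1>0$, this makes $S>0$, so every $R_j^{(\mathrm F)}$ is positive and every quotient is well defined. Apart from this positivity check, the proof is a direct verification with no real obstacle.
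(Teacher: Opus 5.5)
Your proposal is correct and follows the paper's own proof: verify $j=0,1$ directly, identify the quotient invariants of $R^{(\mathrm F)}$ as $\mathfrak q_k(A,B)\,\mathfrak q_k(C,D)$ through the Jacobi factorization, then invoke \eqref{eq:four-ratio-system} for $k=0,1,2$ together with Lemma~\ref{lem:quotient-coordinates}. Your explicit positivity check is a sensible addition that the paper leaves implicit; under the wedge hypothesis it also follows from \eqref{eq:free-positive-range}.
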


\begin{proof}
The assertion holds at $j=0,1$.  For the sequence in
\eqref{eq:free-model-ratios}, its three consecutive quotient
invariants are precisely
\[
 \frac{(R_{k+1}^{(\mathrm F)})^2}
 {R_k^{(\mathrm F)}R_{k+2}^{(\mathrm F)}}
 =\mathfrak q_k(A,B)\mathfrak q_k(C,D).
\]
Apply \eqref{eq:four-ratio-system} successively for $k=0,1,2$.
\end{proof}

Make the change $X=-y/S$ and put
\begin{align}
 P_{\mathrm F}(y)
 &:=
 \frac{J^{d,n}(-y/S)}{\gamma(n)},\label{eq:PF-def}\\
 p_{\mathrm F}(y)
 &:=
 \sum_{j=0}^d(-1)^j\binom dj
 \frac{(A)_j(C)_jD^j}
 {A^jC^j(B)_j(D)_j}y^j\notag\\
 &=
 {}_3F_2\left(
 \begin{matrix}-d,A,C\\ B,D\end{matrix};
 \frac{Dy}{AC}\right).
\label{eq:pF-def}
\end{align}
Define
\begin{equation}\label{eq:cF}
 c_j^{(\mathrm F)}
 :=
 \frac{R_j}{R_j^{(\mathrm F)}}.
\end{equation}
Then $P_{\mathrm F}$ is obtained from $p_{\mathrm F}$ by the
coefficient multiplier $c_j^{(\mathrm F)}$, and
\begin{equation}\label{eq:cF01234}
 c_0^{(\mathrm F)}=\cdots=c_4^{(\mathrm F)}=1.
\end{equation}

\section{Real-rootedness and localization of the comparison model}

The parameter construction has so far matched coefficients.  Its
second purpose is algebraic: after the change of variable
$X=-y/S$, the finite-free family becomes a coefficientwise
convolution of two Jacobi polynomials with positive zeros.

For polynomials written in the normalized form
\[
 p(y)=\sum_{j=0}^d(-1)^j\binom dj a_jy^j,\qquad
 q(y)=\sum_{j=0}^d(-1)^j\binom dj b_jy^j,
\]
their multiplicative finite-free convolution is
\[
 (p\boxtimes_d q)(y)
 :=\sum_{j=0}^d(-1)^j\binom dj a_jb_jy^j.
\]
This operation preserves positive real-rootedness
\cite[Proposition~2.7]{MMP}.  Its logarithmic-mesh property preserves
simplicity when one factor has distinct positive roots
\cite[Proposition~2.17]{MMP}.

\begin{lemma}[Multiplicative interval bound]
\label{lem:free-interval}
Suppose the roots of $p$ lie in $[u_-,u_+]\subset(0,\infty)$ and
the roots of $q$ lie in $[v_-,v_+]\subset(0,\infty)$.  Then every
root of $p\boxtimes_dq$ lies in
\[
 [u_-v_-,u_+v_+].
\]
\end{lemma}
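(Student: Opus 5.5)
The plan is to deduce the interval bound from two facts: convolution with a pure power is a dilation, and the roots of $p\boxtimes_d q$ move monotonically with the roots of $q$. Monotonicity will come from linearity together with \cite[Proposition~2.7]{MMP}. By the symmetry $p\boxtimes_d q=q\boxtimes_d p$, it suffices to prove the upper bound $u_+v_+$; the lower bound $u_-v_-$ follows by the same argument with every inequality reversed.

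\emph{Step 1 (dilation).} Take $q(y)=(y-v)^d$ with $v>0$. Its coefficients in the normalized form are $b_j=(-1)^dv^{d-j}$. Hence
\[
 (p\boxtimes_d q)(y)=(-1)^d v^d\,p(y/v),
\]
whose roots are $v$ times the roots of $p$. These lie in $[u_-v,u_+v]$. So the lemma holds whenever $q$ has a single root of multiplicity $d$.

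\emph{Step 2 (monotonicity in one root of $q$).} Write a general $q$ with roots $v_1\le\cdots\le v_d$ as $q_t(y)=(y-t)r(y)$ with $t=v_i$, where $r$ has the remaining roots. The map $\Psi(g):=p\boxtimes_d g$ is linear on polynomials of degree at most $d$, so
\[
 \Psi(q_t)=\Psi(yr)-t\,\Psi(r).
\]
The polynomials $yr$ and $r$ have interlacing positive roots, and every combination $yr-tr$ with $t>0$ has only positive real roots. By \cite[Proposition~2.7]{MMP}, every $\Psi(q_t)$ with $t>0$ is then positive real-rooted. The Hermite--Kakeya--Obreschkoff theorem therefore says that $\Psi(yr)$ and $\Psi(r)$ interlace. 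Some care is needed with the degree drop, since $\Psi(r)$ may have degree below $d$; I would handle this by a limiting argument through $p\boxtimes_d$ applied to $(y-s)r$ as $s\to\infty$. Interlacing of $\Psi(yr)$ and $\Psi(r)$ implies that each root of $\Psi(q_t)$ is a monotone function of $t$. Checking the endpoints $t\to0$ and $t\to\infty$ fixes the direction as nondecreasing.

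\emph{Step 3 (transport to a pure power).} Raise the roots of $q$ one at a time up to $v_+$. Each move does not decrease any root of $p\boxtimes_d q$, and the final polynomial is $(y-v_+)^d$. By Step 1, the largest root of $p\boxtimes_d q$ is therefore at most $u_+v_+$. Lowering the roots to $v_-$ instead gives the lower bound $u_-v_-$ in the same way. Coincident or boundary roots are covered by continuity of roots in the coefficients.

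The main obstacle is Step 2. There I must justify rigorously that $\Psi$ preserves interlacing, including when degrees drop, and pin down the direction of monotonicity. The first thing I would try is the Obreschkoff characterization: two real polynomials interlace if and only if every real combination of them is real-rooted, which reduces the claim directly to \cite[Proposition~2.7]{MMP}.
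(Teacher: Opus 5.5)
Your Step~2 has a gap as written. The Hermite--Kakeya--Obreschkoff theorem requires $\Psi(yr)-t\Psi(r)$ to be real-rooted for \emph{every} real $t$. Real-rootedness for $t>0$ alone does not force interlacing in general: for example, $(y-1)(y-2)-t(y+10)$ is real-rooted for all $t\geq0$, although $-10\notin[1,2]$. For $t<0$ the polynomial $q_t=(y-t)r$ has a negative root. So preservation of \emph{positive} real-rootedness, which is all the paper quotes from \cite[Proposition~2.7]{MMP}, does not cover these $t$.

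The repair is to use the mixed statement: $p\boxtimes_d q$ is real-rooted whenever $p$ has positive roots and $q$ is merely real-rooted. This is classical, via the Schur--Szeg\H{o} composition theorem. With it, every real combination is real-rooted and HKO applies. Alternatively, invoke preservation of interlacing, \cite[Proposition~2.11]{MMP}, directly.

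Your concern about degree drop is unnecessary. Since $p$ has positive roots, all its coefficients are nonzero, so $\Psi$ rescales each monomial coefficient by a nonzero factor and $\deg\Psi(g)=\deg g$. In particular $\deg\Psi(r)=d-1$. Also, the symmetry $p\boxtimes_dq=q\boxtimes_dp$ plays no role; your lower bound comes from lowering the roots in Step~3.

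Once Step~2 is repaired, your argument is correct but takes a different route from the paper's proof of this lemma. The paper gets the interval in a few lines from the largest-root inequality $\lambda_{\max}(f\times_d g)\leq\lambda_{\max}(f)\lambda_{\max}(g)$ of \cite[Theorem~1.13]{MSS}. It applies this to the reciprocal polynomials for the lower endpoint, and once more after reciprocating for the upper endpoint.

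Your root-transport argument, with the dilation identity of Step~1, is essentially the paper's proof of the \emph{next} result, Lemma~\ref{lem:free-ordered}. It in fact delivers that lemma's stronger ordered bound $v_-u_i\leq w_i\leq v_+u_i$, from which the present interval bound is immediate. So your route replaces the external MSS root bound by interlacing preservation and proves more at once. The paper's route is shorter but leans on a deeper cited theorem.
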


\begin{proof}
For a polynomial with nonzero constant term, let
\[
 p^\vee(x):=\frac{x^dp(1/x)}{p(0)}.
\]
A coefficient calculation shows that
$(p\boxtimes_dq)^\vee=p^\vee\mathbin{\times_d}q^\vee$, where
$\times_d$ is the monic normalization of the same multiplicative
convolution.  The largest-root inequality
\[
 \lambda_{\max}(f\mathbin{\times_d}g)
 \leq\lambda_{\max}(f)\lambda_{\max}(g)
\]
is \cite[Theorem~1.13]{MSS}.  Applied to $p^\vee,q^\vee$, it gives
the lower endpoint $u_-v_-$.  Applying it once more after taking
reciprocal polynomials gives the upper endpoint $u_+v_+$.
\end{proof}

\begin{lemma}[Ordered multiplicative bound]
\label{lem:free-ordered}
Let
\[
 0<u_1\leq\cdots\leq u_d,\qquad
 0<w_1\leq\cdots\leq w_d
\]
be the ordered roots of $p$ and $p\boxtimes_dq$, respectively.  If
all roots of $q$ belong to $[v_-,v_+]\subset(0,\infty)$, then
\begin{equation}\label{eq:free-ordered-bound}
 v_-u_i\leq w_i\leq v_+u_i
 \qquad(1\leq i\leq d).
\end{equation}
\end{lemma}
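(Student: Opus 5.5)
The plan is to reduce the ordered bound to a single inequality for the largest root, and then pass from the largest root to the $i$th root by deleting the top roots and differentiating. The tools are Lemma~\ref{lem:free-interval}, which is itself the largest-root inequality \cite[Theorem~1.13]{MSS} in disguise, together with the compatibility of $\boxtimes_d$ with differentiation.

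\emph{Differentiation identity.} In the normalized form, the $(d-1)$-scaled derivative of
\[
 p(y)=\sum_{j}(-1)^j\binom dj a_jy^j
\]
has coefficients $a_1,\dots,a_d$ shifted down by one index. Precisely,
\[
 \frac1d\,p'(y)=\sum_{j=0}^{d-1}(-1)^{j+1}\binom{d-1}{j}a_{j+1}y^j .
\]
It follows that
\[
 \frac1d\,(p\boxtimes_d q)'=-\Bigl(\frac1d p'\Bigr)\boxtimes_{d-1}\Bigl(\frac1d q'\Bigr),
\]
up to the sign normalization. Iterating, derivatives of $p\boxtimes_dq$ of order $m$ are $\boxtimes_{d-m}$-convolutions of the corresponding derivatives of $p$ and $q$. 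The same calculation applied to the reversed polynomials $p^\vee$ handles the other end of the spectrum.

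\emph{Upper bound $w_i\le v_+u_i$.} I would not differentiate for this side. Instead I would use monotonicity of $\boxtimes_d$ in the roots of the first factor. Let $\tilde p$ have roots
\[
 u_1,\dots,u_i,\;T,\dots,T
\]
with $T$ large, where $T$ appears $d-i$ times. Since $\boxtimes_d$ with a fixed positive-rooted $q$ is a linear operator preserving positive real-rootedness, it preserves interlacing (Obreshkoff/Hermite--Kakeya, which follows from \cite[Proposition~2.7]{MMP}). Hence increasing roots of $p$ weakly increases each ordered root of $p\boxtimes_dq$, and $w_i\le \tilde w_i$. Now let $T\to\infty$. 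The top $d-i$ roots of $\tilde p\boxtimes_d q$ escape to infinity. After rescaling coefficients, the remaining $i$ roots converge to the roots of a convolution in which only the coefficients $a_0,\dots,a_i$ survive. I would identify this limit with a $\boxtimes_i$-convolution of the degree-$i$ polynomial with roots $u_1,\dots,u_i$ and a polynomial whose roots are roots of a derivative of $q^\vee$-type, hence still lying in $[v_-,v_+]$ by interlacing of derivatives. Lemma~\ref{lem:free-interval} in degree $i$ then bounds the largest of these roots by $u_iv_+$, which gives $w_i\le v_+u_i$.

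\emph{Lower bound $v_-u_i\le w_i$.} This is symmetric: set the bottom $i-1$ roots of $p$ to $\epsilon\to0$, and use monotonicity again together with Lemma~\ref{lem:free-interval} for the lower endpoint.

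\emph{Main obstacle.} The delicate step is the limit identification. I must check precisely that sending roots of $p$ to $\infty$ (respectively $0$) turns $\boxtimes_d$ into $\boxtimes_i$ against a derivative of $q$, or of $q^\vee$, whose roots stay in $[v_-,v_+]$. I would verify this directly with the coefficient formula, using that the $j$th normalized coefficient $a_j$ of the polynomial with roots $u_1,\dots,u_i,T,\dots,T$ is asymptotically
\[
 \binom dj^{-1}T^{-(d-i)}\times(\text{elementary symmetric function}).
\]
Monotonicity in the roots of a fixed factor is a known property of finite free multiplicative convolution; if a citation is needed, it follows from interlacing preservation by real-rootedness-preserving linear maps.
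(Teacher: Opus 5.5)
Your argument is correct, but it takes a longer route than the paper's.

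Both proofs rest on the same monotonicity fact: convolution with a fixed positive-rooted factor preserves the coordinatewise root order. The paper applies it in the factor $q$ rather than in $p$. The roots of $q$ lie coordinatewise between those of the polynomials with all $d$ roots equal to $v_-$, respectively $v_+$. Convolving $p$ with such a polynomial is a pure dilation (the identity $f\times_d(x-a)^d=a^df(x/a)$ applied to reciprocal polynomials), whose $i$th root is $v_\pm u_i$. This gives both inequalities for every $i$ at once. It needs no degeneration, no degree reduction, and no appeal to the largest-root inequality of \cite{MSS}, and it recovers Lemma~\ref{lem:free-interval} as the cases $i=1$ and $i=d$.

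Applying monotonicity in $p$ instead forces you to push the surplus roots to $\infty$ or $0$. You then identify the limit as $p_i\boxtimes_i q_i$ with $q_i(y)=\sum_{j\le i}(-1)^j\binom ij b_jy^j$, and invoke the interval bound in lower degree. The reversal of $q_i$ is proportional to $(q^\vee)^{(d-i)}$, so its roots lie in $[v_-,v_+]$ by Rolle. This all checks out: $b_i>0$ guarantees the limit has exact degree $i$, so exactly $d-i$ roots escape. Your route shows that the ordered bound follows from the extremal bound plus monotonicity in either factor, but it costs extra machinery.

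Two small repairs:
\begin{itemize}
\item The power of $T$ in your coefficient asymptotics is off. The common factor is $T^{d-i}$ in the monic normalization and absent when $a_0=1$. This is harmless, since the factor is common to all surviving coefficients.
\item Hermite--Kakeya--Obreshkoff together with \cite[Proposition~2.7]{MMP} does not by itself give the monotonicity you use. The real combinations in HKO can acquire negative roots, so you would need Schur's composition theorem for real-rooted $p$ of arbitrary sign. Also, HKO says nothing about the direction of interlacing, which is exactly what $w_i\le\tilde w_i$ requires. Cite \cite[Proposition~2.11]{MMP} for direction preservation, as the paper does.
\end{itemize}
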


\begin{proof}
We first recall a consequence of preservation of interlacing.  Give
positive-rooted monic polynomials the coordinatewise root order.  If
$f$ precedes $g$ in this order, one can pass from $f$ to $g$ by
moving the roots, one at a time, from the largest to the smallest.
Each consecutive pair in this finite chain interlaces.  Multiplicative
finite-free convolution with a positive-rooted polynomial preserves
the direction of interlacing \cite[Proposition~2.11]{MMP}; hence it
also preserves the coordinatewise root order.

Apply this observation to the reciprocal polynomials used in the
proof of Lemma~\ref{lem:free-interval}.  The roots of $q^\vee$ lie
in $[v_+^{-1},v_-^{-1}]$, so monotonicity and the identity
\[
 f\mathbin{\times_d}(x-a)^d=a^df(x/a)
\]
give
\[
 v_+^{-1}\lambda_i(p^\vee)
 \leq\lambda_i\bigl((p\boxtimes_dq)^\vee\bigr)
 \leq v_-^{-1}\lambda_i(p^\vee).
\]
Reciprocating and reversing the indices proves
\eqref{eq:free-ordered-bound}.
\end{proof}

\begin{lemma}[Finite-free model roots]\label{lem:free-roots}
Under \eqref{eq:free-positive-range}, the polynomial
$p_{\mathrm F}$ has $d$ distinct positive roots.  Every root and
every critical point $y$ satisfies
\begin{equation}\label{eq:free-root-location}
 |y-B|\leq C_{\mathrm{loc}}\sqrt{Bd},\qquad
 \frac14B\leq y\leq2B
\end{equation}
with an absolute constant $C_{\mathrm{loc}}$.
\end{lemma}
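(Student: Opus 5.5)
The plan is to recognise $p_{\mathrm F}$ as a multiplicative finite-free convolution of two Jacobi polynomials, localise the roots of each factor, and then combine them with Lemmas~\ref{lem:free-interval} and \ref{lem:free-ordered}.

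\textbf{Factorisation.} By the factorisation $\mathsf F_j(A,B,C,D,S)=\mathsf J_j(A,B,S)\mathsf J_j(C,D,D)$, we have $p_{\mathrm F}=p_1\boxtimes_d p_2$, where
\[
 p_1(y):={}_2F_1\!\left(-d,A;B;\frac yA\right),\qquad
 p_2(y):={}_2F_1\!\left(-d,C;D;\frac{Dy}{C}\right).
\]
Using $P_d^{(\alpha,\beta)}(1-2x)\propto{}_2F_1(-d,d+\alpha+\beta+1;\alpha+1;x)$, the first factor is a Jacobi polynomial with $\alpha=B-1$ and $\beta=A-B-d$. The second has $\alpha=D-1$ and $\beta=C-D-d$. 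By \eqref{eq:free-positive-range}, namely $A\geq8B$, $B,D\geq K_{\mathrm r}d$ and $C-D\geq4d$, all four parameters exceed $-1$. Hence each factor has $d$ simple zeros with $x\in(0,1)$, that is, simple positive zeros in $y$. Then \cite[Proposition~2.7]{MMP} makes $p_{\mathrm F}$ positive-rooted, and \cite[Proposition~2.17]{MMP} makes its roots distinct, since $p_1$ has distinct positive roots.

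\textbf{Quantitative localisation of each factor.} I aim for the following two bounds, with an absolute constant $C_1$:
\begin{itemize}
\item every root of $p_1$ lies in $[B-C_1\sqrt{Bd},\,B+C_1\sqrt{Bd}]$;
\item every root of $p_2$ lies in $[v_-,v_+]$ with $v_\pm=1\pm C_1\sqrt{d/D}$.
\end{itemize}
To get these, I would write each factor as the characteristic polynomial of its truncated tridiagonal Jacobi matrix, using the explicit three-term recurrence for $P_k^{(\alpha,\beta)}$. I would then apply Gershgorin's theorem to the $d\times d$ truncation.

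For $p_1$ in the variable $y=Ax$ and in the regime $A\geq8B\gg d$, the diagonal entries should be $B+O(k)+O(Bd/A)$. The off-diagonal entries should be $O(\sqrt{Bk})$ for $k\leq d$, since this is a perturbation of the Laguerre matrix $L_d^{B-1}$. Since $d\leq\sqrt{Bd}$, Gershgorin then gives the $O(\sqrt{Bd})$ window.

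For $p_2$ in the variable $y=Cx/D$, the same computation should give diagonal entries $1+O(d/D)$ and off-diagonal entries $O(\sqrt{d/D})$. The hypothesis $4d\leq C-D\leq D/4$ keeps $\beta$ comparable to at most $\alpha$, so the recurrence coefficients stay of this size.

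This is the step I expect to be the main obstacle. The recurrence coefficients must be checked uniformly in the two-parameter regime, in particular the contribution of $\beta=C-D-d$, which may be as large as $D/4$. If Gershgorin turns out to be too crude there, my fallback is a Krasikov-type extreme-zero bound for Jacobi polynomials.

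\textbf{Combination and critical points.} Lemma~\ref{lem:free-ordered} gives $v_-u_i\leq w_i\leq v_+u_i$ for the ordered roots. Hence
\[
 |w_i-B|\leq|u_i-B|+\max(v_+-1,\,1-v_-)\,u_i
 \leq C_1\sqrt{Bd}+C_1\sqrt{d/D}\,(2B).
\]
Since $D=\tfrac12B_{\mathrm J}\asymp B$, the last term is $O(\sqrt{Bd})$, which gives the first inequality in \eqref{eq:free-root-location}. Critical points of a real-rooted polynomial with simple roots lie between its smallest and largest roots, so they obey the same bound.

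Finally, $B\geq K_{\mathrm r}d$ gives $C_{\mathrm{loc}}\sqrt{Bd}\leq C_{\mathrm{loc}}K_{\mathrm r}^{-1/2}B\leq\tfrac12B$, once $K_{\mathrm r}$ is enlarged to at least $4C_{\mathrm{loc}}^2$. This yields $\tfrac14B\leq y\leq2B$. Enlarging $K_{\mathrm r}$ is permitted because \eqref{eq:free-positive-range} holds for any fixed $K_{\mathrm r}$ after increasing $K_4$.
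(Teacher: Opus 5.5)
Your proposal is correct and follows the paper's proof essentially step for step: the same factorization $p_{\mathrm F}=p_1\boxtimes_d p_2$ into Jacobi factors, Gershgorin applied to the transported truncated Jacobi matrix, the MMP positivity and logarithmic-mesh results for simplicity, then $D\asymp B$ to combine the two windows, Rolle's theorem for the critical points, and fixing $K_{\mathrm r}$ after $C_{\mathrm{loc}}$. The entry sizes you anticipate are exactly what the paper verifies through the exact formula for (diagonal entry minus $V$): diagonal offsets below $4d$ and off-diagonal entries at most $2\sqrt{Vd}$, so $C_{\mathrm J}=8$ and no Krasikov fallback is needed. The remaining differences are cosmetic: the paper combines the factors via Lemma~\ref{lem:free-interval} rather than the ordered version, and it proves the sharper window $1\pm4(d+\sqrt{(C-D)d})/D$ for $p_2$, which it needs later for the semicircle limit.
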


\begin{proof}
Factor \eqref{eq:pF-def} as
\begin{equation}\label{eq:free-factorization}
 p_{\mathrm F}=p_1\boxtimes_d p_2,
\end{equation}
where
\[
 p_1(y)={}_2F_1\left(-d,A;B;\frac yA\right),
 \qquad
 p_2(y)={}_2F_1\left(-d,C;D;\frac{Dy}{C}\right).
\]
For $U,V>0$, write
\[
 q_{U,V}(y):={}_2F_1\left(-d,U;V;\frac yU\right)
 =\frac{d!}{(V)_d}
 P_d^{(V-1,U-V-d)}\left(1-\frac{2y}{U}\right).
\]
Put $\alpha=V-1$, $\beta=U-V-d$, and
$H=\alpha+\beta=U-d-1$.  After transporting the standard Jacobi
matrix \cite[Chapter~IV]{Szego} by $y=U(1-t)/2$, its $k$th diagonal
entry is
\begin{equation}\label{eq:jacobi-diagonal}
 U\,\frac{H(V+2k)+2k(k+1)}
 {(H+2k)(H+2k+2)}.
\end{equation}
Its off-diagonal entry between rows $k-1$ and $k$ is
\begin{equation}\label{eq:jacobi-offdiagonal}
 \frac{U}{2k+H}
 \left\{
 \frac{k(k+\alpha)(k+\beta)(k+H)}
 {(2k+H-1)(2k+H+1)}
 \right\}^{1/2},\qquad 1\leq k<d.
\end{equation}
We record a ratio-free estimate for this matrix.  Suppose that
\begin{equation}\label{eq:Jacobi-ratio-free-range}
 U\geq V+d,\qquad V\geq32d.
\end{equation}
Then $H\geq V-1\geq31d$.  Subtracting $V$ from
\eqref{eq:jacobi-diagonal} gives the exact expression
\begin{equation}\label{eq:jacobi-diagonal-difference}
 \frac{
  2k(k+H+1)(U-2V)+VH(d-1)
 }{(H+2k)(H+2k+2)}.
\end{equation}
Since $|U-2V|\leq U$, $U=H+d+1$, and $V\leq H+1$,
the two terms in the absolute value of
\eqref{eq:jacobi-diagonal-difference} are bounded respectively by
\[
 2d\frac{32}{31}\frac{33}{31}
 \quad\hbox{and}\quad
 \frac{32}{31}d.
\]
Here we used $k+H+1\leq H+d\leq(32/31)H$,
$U\leq H+2d\leq(33/31)H$, and
$V\leq H+d\leq(32/31)H$.
Their sum is less than $4d$.  For an off-diagonal entry put
$\beta=U-V-d=H+1-V$.  Under
\eqref{eq:Jacobi-ratio-free-range}, $0\leq\beta\leq H$, and
\[
 \frac{(k+\beta)(k+H)}
 {(2k+H-1)(2k+H+1)}\leq1.
\]
Also $U/(2k+H)\leq33/31$ and
$k+V-1\leq V+d\leq(33/32)V$.  Hence every off-diagonal entry is at
most $2\sqrt{Vd}$, and the two entries adjacent to a row have sum
at most $4\sqrt{Vd}$.

Gershgorin's theorem therefore gives
\begin{equation}\label{eq:Jacobi-block-location}
 \operatorname{roots}(q_{U,V})
 \subset
 [V-C_{\mathrm J}\sqrt{Vd},\,V+C_{\mathrm J}\sqrt{Vd}]
\end{equation}
with, for example, $C_{\mathrm J}=8$.  Indeed,
$A\geq8B$, $B\geq K_{\mathrm r}d$, and
$K_{\mathrm r}\geq32$ imply
\eqref{eq:Jacobi-ratio-free-range} for $(U,V)=(A,B)$, so this
applies to $p_1=q_{A,B}$.

For the second factor, use
\[
 p_2(y)
 =\frac{d!}{(D)_d}
 P_d^{(D-1,C-D-d)}
 \left(1-\frac{2Dy}{C}\right).
\]
Both Jacobi parameters exceed $-1$.  The inequalities
$D\geq K_{\mathrm r}d$ and $C-D\geq4d$ imply
\eqref{eq:Jacobi-ratio-free-range} for $(U,V)=(C,D)$.  We need a
sharper estimate than \eqref{eq:Jacobi-block-location} for the
semicircle limit.  Set $G:=C-D$ and
$H=C-d-1=D+G-d-1$.  Under \eqref{eq:free-positive-range},
\[
 H\geq D,\qquad |C-2D|\leq D,\qquad
 \frac{C}{H}\leq\frac{17}{16}.
\]
The exact difference \eqref{eq:jacobi-diagonal-difference}, now with
$(U,V)=(C,D)$, is bounded in absolute value by $4d$.  Indeed,
$k+H+1\leq H+d\leq(33/32)H$, so its two numerator terms, after
division by the denominator, are at most $(66/32)d$ and $d$.

For the off-diagonal entries, $\beta=G-d$ and
\[
 k+D-1\leq\frac{33}{32}D,\qquad
 k+\beta\leq G,\qquad
 k+H\leq\frac{33}{32}H.
\]
Both denominator factors inside the square root in
\eqref{eq:jacobi-offdiagonal} are at least $H$.  It follows that
every off-diagonal entry is at most $2\sqrt{Gd}$.  Gershgorin's
theorem therefore gives
\[
 \operatorname{roots}(q_{C,D})
 \subset[D-4d-4\sqrt{Gd},\,D+4d+4\sqrt{Gd}].
\]
Since $p_2(y)=q_{C,D}(Dy)$,
\begin{equation}\label{eq:p2-sharp-location}
 \operatorname{roots}(p_2)\subset[1-\eta,1+\eta],
 \qquad
 \eta:=\frac{4(d+\sqrt{(C-D)d})}{D}<\frac12.
\end{equation}
The last inequality follows from $d/D\leq1/32$ and
$(C-D)/D\leq1/4$.
Both factors have simple positive roots.  The cited positivity and
logarithmic-mesh theorems give simplicity and positivity for
\eqref{eq:free-factorization}.  Lemma~\ref{lem:free-interval},
together with $D\asymp B$, gives
$|y-B|\leq C_{\mathrm{loc}}\sqrt{Bd}$ for every root.  Critical points lie
between consecutive roots.  Fixing $K_{\mathrm r}$ after
$C_{\mathrm{loc}}$ makes the last two inequalities in
\eqref{eq:free-root-location} follow from $B\geq K_{\mathrm r}d$.
\end{proof}

\section{The fifth-order residual multiplier}

We have constructed a positive-rooted model whose normalized
coefficients agree with those of the Jensen polynomial at
$j=0,1,2,3,4$.  We now continue the logarithm of their ratio away
from the integers and use those five zeros to control the entire
coefficient multiplier.

\begin{lemma}[Fifth-order residual bound]\label{lem:free-defect}
There are absolute constants $K_5,C_{\mathrm{def}}>0$ with the following
property.  Suppose $d\geq5$ and
\[
 n^3\log^2(n+2)\geq K_5d^5.
\]
Put
\[
 r_{\mathrm F}:=K_6\sqrt{Bd},\qquad
 \Omega_{\mathrm F}
 :=\{z\in\C:\dist(z,[0,d])\leq2r_{\mathrm F}\},
\]
where $K_6$ is a sufficiently large absolute constant.  There is
a holomorphic function $c_{\mathrm F}$ on a neighborhood of
$\Omega_{\mathrm F}$ such that
$c_{\mathrm F}(j)=c_j^{(\mathrm F)}$ for $0\leq j\leq d$ and
\begin{equation}\label{eq:cF-defect}
 \sup_{\Omega_{\mathrm F}}|c_{\mathrm F}-1|
 \leq
 C_{\mathrm{def}}\frac{d^{5/2}}{n^{3/2}\log(n+2)}.
\end{equation}
\end{lemma}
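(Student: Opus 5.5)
Define the holomorphic continuation of $\log c_j^{(\mathrm F)}$ exactly as in \eqref{eq:EJ-def}, now with both Jacobi factors:
\begin{align*}
 E_{\mathrm F}(z)
 &:=\log\gamma(n+z)-\log\gamma(n)-z\log S
 +z\log A+z\log C-z\log D\\
 &\quad+\log\frac{\Gamma(B+z)}{\Gamma(B)}
 -\log\frac{\Gamma(A+z)}{\Gamma(A)}
 +\log\frac{\Gamma(D+z)}{\Gamma(D)}
 -\log\frac{\Gamma(C+z)}{\Gamma(C)},
\end{align*}
and set $c_{\mathrm F}:=e^{E_{\mathrm F}}$. Then $c_{\mathrm F}(j)=c_j^{(\mathrm F)}$ at the integers $0\le j\le d$. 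Holomorphy on a neighborhood of $\Omega_{\mathrm F}$ needs $n+z$ to lie in the sector of Proposition~\ref{prop:sectorial-saddle}, and $A+z,\dots,D+z$ to stay away from the poles of $\Gamma$. The wedge gives $d\ll n^{3/5}\log^{2/5}n$, so $d=o(n)$, and $r_{\mathrm F}\asymp\sqrt{nd}=o(n)$. Hence every $z\in\Omega_{\mathrm F}$ satisfies $|z|\le d+2r_{\mathrm F}\le\eta n$ once $K_5$ is large. The disk hypothesis $|w|\le\eta x$ of Lemma~\ref{lem:signed-saddle} and Stirling's formula for $\psi^{(m)}$ at arguments $\asymp n$ then apply uniformly. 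A fixed finite range of $n$ is absorbed into the constants.

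The key step uses the five zeros $E_{\mathrm F}(0)=\cdots=E_{\mathrm F}(4)=0$ from Lemma~\ref{lem:match-four}. Write the Newton remainder
\[
 E_{\mathrm F}(z)=z(z-1)(z-2)(z-3)(z-4)\,
 E_{\mathrm F}[0,1,2,3,4,z].
\]
Represent the divided difference by the Hermite--Genocchi average of $E_{\mathrm F}^{(5)}/5!$ over the simplex with vertices $0,\dots,4,z$. That simplex lies in the convex set $\Omega_{\mathrm F}$, so
\[
 \sup_{\Omega_{\mathrm F}}|E_{\mathrm F}|\le
 \frac{(d+2r_{\mathrm F}+4)^5}{5!}\,
 \sup_{\Omega_{\mathrm F}}|E_{\mathrm F}^{(5)}|.
\]
Next, differentiate five times:
\[
 E_{\mathrm F}^{(5)}(z)=h^{(5)}(n+z)-\psi^{(4)}(b+z)+\psi^{(4)}(B+z)-\psi^{(4)}(A+z)+\psi^{(4)}(D+z)-\psi^{(4)}(C+z),
\]
where $-\psi^{(4)}(b+z)$ comes from the $\Gamma(n+z+\tfrac12)$ in \eqref{eq:gamma-h}. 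The linear terms and the $4^{-z}$ factor die after two derivatives. Bound the terms one at a time:
\begin{itemize}
 \item[(a)] By \eqref{eq:hfifth}, $|h^{(5)}(n+z)|\le C/(n^4\log n)$.
 \item[(b)] By the mean value theorem with $\psi^{(5)}(u)=O(u^{-5})$, the pair $\psi^{(4)}(B+z)-\psi^{(4)}(b+z)$ is $O(|B-b|\,n^{-5})$. Lemma~\ref{lem:four-ratio-parameters} and Lemma~\ref{lem:Jacobi-refined} give $B-b=O(n/\log n)$, so this is $O(n^{-4}/\log n)$.
 \item[(c)] Likewise $\psi^{(4)}(D+z)-\psi^{(4)}(C+z)=O((C-D)n^{-5})=O(n^{-4}/\log n)$, using $D\asymp n$.
 \item[(d)] $\psi^{(4)}(A+z)=O((n\log n)^{-4})$.
\end{itemize}
The dangerous leading size $n^{-4}$ of the individual $\psi^{(4)}$ terms therefore cancels in pairs, which is why the parameters had to be close on the $n/\log n$ scale. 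This gives
\[
 \sup_{\Omega_{\mathrm F}}|E_{\mathrm F}^{(5)}|\ll\frac1{n^4\log n}.
\]

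Combining, and using $d+r_{\mathrm F}\ll\sqrt{nd}$ (valid since $d\le n$), we get
\[
 \sup|E_{\mathrm F}|\ll\frac{(nd)^{5/2}}{n^4\log n}=\frac{d^{5/2}}{n^{3/2}\log n}.
\]
The wedge hypothesis makes the right side at most $K_5^{-1/2}$, which is below $1$ for large $K_5$. The elementary inequality $|e^w-1|\le e|w|$ for $|w|\le1$ then transfers the bound to $c_{\mathrm F}-1$, giving \eqref{eq:cF-defect}.

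The main obstacle is the uniformity of step (a): \eqref{eq:hfifth} must hold on complex disks of radius $\asymp\sqrt{nd}$ around $n$, which is guaranteed only because $\sqrt{nd}\le\eta n$. A second point needs care: $K_6$ is fixed first, from Proposition~\ref{prop:abstract-stability} and Lemma~\ref{lem:free-roots}, and $K_5$ is then chosen depending on $K_6$. That order is legitimate because all constants are absolute.
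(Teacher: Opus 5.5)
Your proposal is correct and follows the paper's proof essentially step for step. It uses the same continuation $E_{\mathrm F}$ with $c_{\mathrm F}=e^{E_{\mathrm F}}$, and the same five-term formula for $E_{\mathrm F}^{(5)}$, in which the $\psi^{(4)}$ terms cancel in pairs because the parameters agree on the $n/\log n$ scale. It then applies the same Hermite--Genocchi bound from the zeros at $0,\dots,4$ and exponentiates, with only cosmetic differences such as $|e^w-1|\leq e|w|$ on $|w|\leq1$ in place of $|e^w-1|\leq2|w|$ on $|w|\leq1/2$.
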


\begin{proof}
On the sector in Proposition~\ref{prop:sectorial-saddle}, let
$g=\log\gamma$ be the branch real on the positive axis.  Define
\begin{align}
 E_{\mathrm F}(z)
 &:=
 g(n+z)-g(n)-z\log S+z\log A+z\log C-z\log D
 \notag\\
 &\quad
 +\log\Gamma(B+z)-\log\Gamma(B)
 -\log\Gamma(A+z)+\log\Gamma(A)\notag\\
 &\quad
 +\log\Gamma(D+z)-\log\Gamma(D)
 -\log\Gamma(C+z)+\log\Gamma(C).
\label{eq:EF-def}
\end{align}
At every integer $0\leq j\leq d$,
$e^{E_{\mathrm F}(j)}=c_j^{(\mathrm F)}$.

The fifth-wedge hypothesis and
Lemma~\ref{lem:four-ratio-parameters} imply
\[
 A\asymp n\log n,\quad
 B\asymp C\asymp D\asymp n,\quad
 d+2r_{\mathrm F}=o(n).
\]
After increasing $K_5$, the set $n+\Omega_{\mathrm F}$ lies in the
sectorial neighborhood on which $g$ is defined, and every term in
\eqref{eq:EF-def} is holomorphic on a neighborhood of
$\Omega_{\mathrm F}$.

Let $b=n+\frac12$.  Five differentiations, followed by
\eqref{eq:gamma-h}, give
\begin{align}
 E_{\mathrm F}^{(5)}(z)
 &=
 h^{(5)}(n+z)
 +\psi^{(4)}(B+z)-\psi^{(4)}(b+z)-\psi^{(4)}(A+z)\notag\\
 &\quad+\psi^{(4)}(D+z)-\psi^{(4)}(C+z).
\label{eq:EF-fifth}
\end{align}
Equation \eqref{eq:hfifth} bounds the first term by
$C_\ast/(n^4\log n)$.  The parameter asymptotics give
\[
 |B-b|+|C-D|\leq\frac{C_\ast n}{\log n}.
\]
The series for $\psi^{(5)}$ and the mean-value formula therefore give
\[
 |\psi^{(4)}(B+z)-\psi^{(4)}(b+z)|
 +|\psi^{(4)}(D+z)-\psi^{(4)}(C+z)|
 \leq\frac{C_\ast}{n^4\log n}.
\]
Finally,
$|\psi^{(4)}(A+z)|\leq C_\ast/A^4$.
Consequently
\begin{equation}\label{eq:EF-fifth-bound}
 \sup_{\Omega_{\mathrm F}}|E_{\mathrm F}^{(5)}|
 \leq\frac{C_\ast}{n^4\log(n+2)}.
\end{equation}

By \eqref{eq:cF01234}, the real value of $E_{\mathrm F}$ vanishes
at $0,1,2,3,4$.  The set $\Omega_{\mathrm F}$ is convex, so the
Hermite--Genocchi formula and \eqref{eq:EF-fifth-bound} give
\[
 |E_{\mathrm F}(z)|
 \leq
 \frac{C_\ast}{5!\,n^4\log(n+2)}
 |z(z-1)(z-2)(z-3)(z-4)|.
\]
Since $r_{\mathrm F}\geq d$ and $B\asymp n$,
\[
 \sup_{\Omega_{\mathrm F}}|E_{\mathrm F}|
 \leq
 C_\ast\frac{r_{\mathrm F}^5}{n^4\log(n+2)}
 \leq
 C_\ast\frac{d^{5/2}}{n^{3/2}\log(n+2)}.
\]
For $K_5$ large the last expression is at most $1/2$.
Taking $c_{\mathrm F}=e^{E_{\mathrm F}}$ and using
$|e^w-1|\leq2|w|$ for $|w|\leq1/2$ proves
\eqref{eq:cF-defect}.
\end{proof}

\section{Critical-point derivative bounds}

The multiplier estimate in Lemma~\ref{lem:free-defect} verifies the
analytic hypothesis of Proposition~\ref{prop:abstract-stability}.  It
remains to verify the derivative-ratio hypothesis at the critical
points of the finite-free model.

\begin{lemma}[Critical-point derivative ratios]\label{lem:free-ratios}
Let the parameters be the solution supplied by
Lemma~\ref{lem:four-ratio-parameters}, and assume
\eqref{eq:fifth-wedge-hypothesis} with its constant sufficiently
large.  Let $y$ be a critical point of $p_{\mathrm F}$ and put
\[
 T_k:=\frac{y^kp_{\mathrm F}^{(k)}(y)}
 {p_{\mathrm F}(y)}.
\]
If $K_6$ in the definition of $r_{\mathrm F}$ is sufficiently
large, then, for all sufficiently large $n$,
\begin{equation}\label{eq:free-Tk}
 |T_k|\leq r_{\mathrm F}^k
 \qquad(0\leq k\leq d).
\end{equation}
\end{lemma}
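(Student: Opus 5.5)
The plan is to derive the bounds \eqref{eq:free-Tk} from the hypergeometric differential equation satisfied by $p_{\mathrm F}$, the ``perturbed Jacobi equation'' mentioned at the end of Section~\ref{sec:architecture}. I will not use root spacing, which the Gershgorin localization of Lemma~\ref{lem:free-roots} does not control. Put $\theta=y\,\frac{\dd}{\dd y}$ and $x=Dy/(AC)$. By \eqref{eq:pF-def}, $w=p_{\mathrm F}$ satisfies
\[
 \theta(\theta+B-1)(\theta+D-1)w=\frac{Dy}{AC}(\theta-d)(\theta+A)(\theta+C)w .
\]
Since $\theta^m$ is a combination of the $y^kD^k$ with Stirling coefficients, this equation can be differentiated $k$ times. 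Evaluated at a critical point $y$ and divided by $p_{\mathrm F}(y)$, it becomes a linear recursion for the numbers $T_k$. The recursion expresses $T_{k+3}$, or more conveniently $T_{k+2}$, in terms of lower $T_j$, with coefficients that are explicit polynomials in $k$, $y$, $A$, $B$, $C$, $D$, $d$.

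The first step is to normalize the recursion using the scales available. These are $A\asymp n\log n$ and $B\asymp C\asymp D\asymp n$ from Lemma~\ref{lem:four-ratio-parameters}, together with $y\in[\frac14B,2B]$ from \eqref{eq:free-root-location}. After dividing by $A$, the dominant part of the equation is the Laguerre-type second-order operator
\[
 y\,\partial^2+(B'-y')\partial+d',
\]
up to the harmless dilation $y\mapsto Dy/C$ and a shift of $B$ by $D$-terms, where $B'$, $y'$, $d'$ denote the correspondingly rescaled quantities. The genuinely third-order terms carry a factor $y/A=O(1/\log n)$ or $(C-D)/D=O(1/\log n)$. I would therefore write the recursion in the form
\[
 T_{k+2}=\alpha_kT_{k+1}+\beta_kT_k+\epsilon_k(T_{k+3},T_{k+2},\dots),
\]
and record the sizes of the coefficients: $|\alpha_k|\ll y\,(|B-y|+k)/y\ll\sqrt{Bd}+k$, $|\beta_k|\ll y(d+k)\ll Bd$, and small relative coefficients in $\epsilon_k$. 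Because $k\le d\le\sqrt{Bd}$, both $|\alpha_k|\le K\sqrt{Bd}$ and $|\beta_k|\le KBd$ hold with an absolute $K$.

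The induction then runs as follows. Start from $T_0=1$ and $T_1=0$, and assume $|T_j|\le r_{\mathrm F}^j$ for $j\le k+1$. This gives
\[
 |T_{k+2}|\le(K K_6^{-1}+KK_6^{-2})r_{\mathrm F}^{k+2}+|\epsilon_k|,
\]
which is at most $r_{\mathrm F}^{k+2}$ once $K_6$ is large, provided the $\epsilon_k$ terms are absorbed. The induction terminates at $k=d$, since $T_k=0$ for $k>d$, which also closes the recursion from above.

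The main obstacle is absorbing the third-order correction $\epsilon_k$, because it involves $T_{k+3}$, a higher-index term, so the induction is not triangular. I would first try treating the whole family $(T_0,\dots,T_d)$ at once, for example by a downward-weighted sup-norm $\max_k|T_k|/r_{\mathrm F}^k$. The aim is to show that the third-order part, carrying relative size $O(y/A)=O(1/\log n)$, contributes at most $O(1/\log n)\cdot r_{\mathrm F}/\sqrt{Bd}$ in this norm. That contribution is a contraction perturbation for large $n$, so the triangular Laguerre estimate transfers. If this fails, the fallback is to use the factorization \eqref{eq:free-factorization} instead. In that case I would verify the ratio bounds for the Jacobi factor $p_1$ from its exact second-order Jacobi equation, where the recursion is genuinely triangular, and then transfer them through $\boxtimes_d$ with the nearly-identity factor $p_2$, whose roots lie in $[1-\eta,1+\eta]$ by \eqref{eq:p2-sharp-location}.
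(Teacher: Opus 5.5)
Your main route is sound. It is a leaner variant of the paper's argument rather than the same one.

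\textbf{How the paper proceeds.} The paper never expands the third-order recurrence for the $T_k$. It uses the exact factorization \eqref{eq:perturbed-Jacobi-factor} and inverts the first-order factor $\mathcal E+D$ acting on $W=\mathcal Jp_{\mathrm F}$ by backward substitution from $U_{d+1}=0$. It then feeds the resulting bound $|U_m|\ll\varepsilon_{\mathrm p}\mathcal M r^{m+2}/D$ into the exact Jacobi recurrence \eqref{eq:free-Jacobi-recurrence}. There the explicit factor $\varepsilon_{\mathrm p}=O(\mathcal L_n^{-1})$ is what makes $\vartheta_{n,d}$ small. The ratio $r/D$ enters only through the geometric series of the backward substitution.

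\textbf{How your route differs.} You differentiate the whole third-order equation and put every term, including $T_{m+3}$, into the single weighted maximum $\mathcal M=\max_k|T_k|/r_{\mathrm F}^k$. This avoids the factorization identity and the auxiliary $U_m$. It closes by the same contradiction as the paper: $T_0=1$ and $T_1=0$ force any maximizing index for $\mathcal M>1$ to be at least $2$.

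\textbf{Where the smallness actually comes from.} It is not where you locate it. Divide the equation by $y$, differentiate $m$ times, and multiply by $y^{m+1}/p_{\mathrm F}(y)$. The coefficient of $T_{m+3}$ is then $1-Dy/(AC)\in[\tfrac34,1]$, which is of order one, not $O(y/A)$. You should normalize by the coefficient of $T_{m+2}$, not by $A$:
\[
 c_2(m)=D+(B-y)+\varepsilon_{\mathrm p}y-\frac{Dy}{A}+1+2m
 +\frac{Dy}{AC}(d-3-3m)\geq\frac D2 .
\]
The lower bound uses $|y-B|\leq C_{\mathrm{loc}}\sqrt{Bd}$, $y\leq2B$, $A\geq8B$, $C\geq D$, and $d+\sqrt{Bd}=o(D)$.

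With this normalization the weighted inequality has three contributions:
\begin{itemize}
\item the $T_{m+3}$ term contributes at most $2r_{\mathrm F}/D=O(K_6\sqrt{d/n})$, which comes from scale separation, not from a logarithm;
\item the $T_{m+1}$ coefficient, including the $m$-, $m^2$- and $m^3$-terms created by differentiation, contributes at most $2C_{\mathrm{loc}}/K_6+O(d/r_{\mathrm F})$;
\item the $T_m$ coefficient contributes $O(K_6^{-2})$.
\end{itemize}
So the contraction you want does hold. This version does not even need $\varepsilon_{\mathrm p}\to0$, since that term only enlarges $c_2$. You do have to write these differentiated coefficients out, because your sketch records only orders of magnitude and misattributes the decisive one.

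\textbf{The fallback.} The route through $\boxtimes_d$ is unnecessary and not obviously workable. The $T_k$ are evaluated at critical points of $p_{\mathrm F}$, and convolution with $p_2$ gives no direct control of derivative ratios there.
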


\begin{proof}
The hypergeometric equation is a small perturbation of the Jacobi
equation because $(C-D)/C=O(\mathcal L_n^{-1})$.  We first control
the failure of $p_{\mathrm F}$ to satisfy the Jacobi equation and
then use the differentiated Jacobi recurrence to bound all the
ratios $T_k$.

Write $\mathcal E=y\,\dd/\dd y$ for the Euler operator and
\[
 \mathcal Jp
 :=
 y\left(1-\frac yA\right)p''
 +\left\{B-y+\frac{(d-1)y}{A}\right\}p'
 +dp.
\]
The hypergeometric equation for \eqref{eq:pF-def} is
\[
 \left[
 \mathcal E(\mathcal E+B-1)(\mathcal E+D-1)
 -\frac{Dy}{AC}(\mathcal E-d)(\mathcal E+A)(\mathcal E+C)
 \right]p_{\mathrm F}=0.
\]
Put $\varepsilon_{\mathrm p}=(C-D)/C$.  Since
$D/C=1-\varepsilon_{\mathrm p}$ and
$(\mathcal E+D-1)y=y(\mathcal E+D)$, the equation factors exactly as
\begin{equation}\label{eq:perturbed-Jacobi-factor}
 (\mathcal E+D)\mathcal Jp_{\mathrm F}
 =
 -\frac{\varepsilon_{\mathrm p}}{A}
 \mathcal E(\mathcal E-d)(\mathcal E+A)p_{\mathrm F}.
\end{equation}

Set $W=\mathcal Jp_{\mathrm F}$ and, at the fixed critical point,
\[
 U_m:=\frac{y^mW^{(m)}(y)}{p_{\mathrm F}(y)}.
\]
Differentiating \eqref{eq:perturbed-Jacobi-factor} $m$ times gives
\begin{equation}\label{eq:U-backward}
 U_{m+1}+(D+m)U_m=-\varepsilon_{\mathrm p}V_m,
\end{equation}
where
\[
 V_m
 :=
 \frac{y^m}{A\,p_{\mathrm F}(y)}
 \left\{\mathcal E(\mathcal E-d)(\mathcal E+A)
 p_{\mathrm F}\right\}^{(m)}(y).
\]
Since $W$ has degree at most $d$, $U_{d+1}=0$.

Write $r=r_{\mathrm F}$ and let
\[
 \mathcal M:=\max_{0\leq k\leq d}\frac{|T_k|}{r^k}.
\]
Expanding the cubic Euler operator and differentiating monomials
gives, with $T_k=0$ for $k>d$,
\[
 V_m=\frac1A\{T_{m+3}
 +(A-d+3+3m)T_{m+2}
 +\beta_mT_{m+1}+\gamma_mT_m\},
\]
where the coefficients are explicitly
\[
 \beta_m
 =A(2m+1-d)-d(2m+1)+3m^2+3m+1,
 \qquad
 \gamma_m=m(m-d)(m+A).
\]
For $0\leq m\leq d$, the bounds in
\eqref{eq:free-positive-range} imply
$|\beta_m|\leq C_\ast Ad$ and
$|\gamma_m|\leq C_\ast Ad^2$.  Consequently
\begin{equation}\label{eq:Vm-quantified}
 |V_m|\leq C_\ast\mathcal M r^{m+2}
 \left(1+\frac rA+\frac dA+\frac dr+\frac{d^2}{r^2}\right).
\end{equation}
If $r/D\leq1/4$, backward substitution in
\eqref{eq:U-backward}, beginning with $U_{d+1}=0$, gives
\begin{equation}\label{eq:U-bound}
 |U_m|
 \leq
 \frac{C_\ast\varepsilon_{\mathrm p}}{D(1-r/D)}\,
 \mathcal M r^{m+2}
 \left(1+\frac rA+\frac dA+\frac dr+\frac{d^2}{r^2}\right).
\end{equation}

On the other hand, differentiating the Jacobi operator gives the
exact identity
\begin{align}
 yU_m
 &=
 \left(1-\frac yA\right)T_{m+2}
 +\left\{B+m-y+\frac{(d-1-2m)y}{A}\right\}T_{m+1}\notag\\
 &\quad
 +(d-m)\left(1+\frac mA\right)yT_m.
\label{eq:free-Jacobi-recurrence}
\end{align}
Since $y\leq2B$ and $A\geq8B$, we have
$1-y/A\geq3/4$.  For $0\leq m\leq d-2$, define
\begin{align*}
 a_m&:=\frac{
 \left|B+m-y+(d-1-2m)y/A\right|}
 {(1-y/A)r},\\
 b_m&:=\frac{(d-m)(1+m/A)y}{(1-y/A)r^2}.
\end{align*}
Lemma~\ref{lem:free-roots} and $r=K_6\sqrt{Bd}$ give the explicit
bounds
\begin{align}
 a_m&\leq\frac43\left(
  \frac{C_{\mathrm{loc}}}{K_6}+\frac dr
  +\frac{6dB}{Ar}\right),\label{eq:am-bound}\\
 b_m&\leq\frac{8}{3K_6^2}\left(1+\frac dA\right).
\label{eq:bm-bound}
\end{align}
The contribution of $yU_m$ in
\eqref{eq:free-Jacobi-recurrence}, after division by
$(1-y/A)r^{m+2}$, is at most $\vartheta_{n,d}\mathcal M$, where
\begin{equation}\label{eq:vartheta}
 \vartheta_{n,d}:=
 \frac{4C_\ast\varepsilon_{\mathrm p}}{3}
 \frac{y/D}{1-r/D}
 \left(1+\frac rA+\frac dA+\frac dr+\frac{d^2}{r^2}\right).
\end{equation}

The wedge hypothesis implies
\[
 \frac dn\leq K_4^{-1/5}n^{-2/5}\log^{2/5}(n+2).
\]
Together with the parameter asymptotics, this shows uniformly in
the wedge that
\[
 \frac rA,\ \frac dA,\ \frac dr,\ \frac{d^2}{r^2},\
 \frac rD\longrightarrow0,\qquad
 \varepsilon_{\mathrm p}=O(\mathcal L_n^{-1}),\qquad
 \frac yD=O(1).
\]
Choose $K_6$ so that the fixed terms in
\eqref{eq:am-bound}--\eqref{eq:bm-bound} are small, and then choose
$n$ large enough that
\[
 a_m\leq\frac14,\qquad b_m\leq\frac14,
 \qquad \vartheta_{n,d}\leq\frac14
\]
for every admissible $d$ and $m$.  Equations \eqref{eq:U-bound} and
\eqref{eq:free-Jacobi-recurrence} now show
\[
 \frac{|T_{m+2}|}{r^{m+2}}
 \leq
 \frac14\frac{|T_{m+1}|}{r^{m+1}}
 +\frac14\frac{|T_m|}{r^m}
 +\frac14\mathcal M.
\]
Now $T_0=1$ and $T_1=0$.  If $\mathcal M>1$, choose an index at
which the maximum is attained.  It is at least $2$, and the last
inequality gives $\mathcal M\leq3\mathcal M/4$, a contradiction.
Thus
$\mathcal M\leq1$, proving \eqref{eq:free-Tk}.
\end{proof}

\section{Completion of the proof}

\begin{proof}[Proof of Theorem~\ref{thm:main}]
Let $n_0$ exceed all fixed thresholds in the preceding lemmas.
Choose $K$ larger than $K_4,K_5$, the constants required in the
root and derivative-ratio estimates, and
\[
 1+\max_{0\leq m<n_0}m^3\log^2(m+2).
\]
Increase it once more so that \eqref{eq:cF-defect} is at most
$1/2$ whenever the main hypothesis holds.  Then
$n^3\log^2(n+2)\geq Kd^5$ implies $n\geq n_0$.

If $d\leq4$, then
Lemma~\ref{lem:match-four} matches every coefficient of the two
degree-$d$ polynomials, so $P_{\mathrm F}=p_{\mathrm F}$.
Lemma~\ref{lem:free-roots} proves the conclusion directly.

Suppose $d\geq5$.
Lemma~\ref{lem:free-roots} gives $d$ simple positive zeros for
$p_{\mathrm F}$.  Lemma~\ref{lem:free-defect} supplies a holomorphic
multiplier $c_{\mathrm F}$ on $\Omega_{\mathrm F}$ with
\[
 \sup_{\Omega_{\mathrm F}}|c_{\mathrm F}-1|\leq\frac12.
\]
At the integers,
$c_{\mathrm F}(j)=R_j/R_j^{(\mathrm F)}>0$, and
\eqref{eq:cF01234} gives
$c_{\mathrm F}(0)=\cdots=c_{\mathrm F}(4)=1$.
Lemma~\ref{lem:free-ratios} verifies
\eqref{eq:abstract-derivative-ratios} with $r=r_{\mathrm F}$.
All the hypotheses of Proposition~\ref{prop:abstract-stability} are
therefore satisfied, so $P_{\mathrm F}$ has $d$ simple positive
zeros.

Finally,
\[
 P_{\mathrm F}(y)
 =\frac{J^{d,n}(-y/S)}{\gamma(n)}
\]
with $S>0$.  Hence $J^{d,n}$ has $d$ distinct negative real zeros.
\end{proof}

\section{The joint semicircle limit}
\label{sec:joint-semicircle}

Let $\mu_{\mathrm{sc}}$ denote the probability measure
\[
 \dd\mu_{\mathrm{sc}}(x)
 :=\frac1{2\pi}\sqrt{4-x^2}\,
   \boldsymbol 1_{[-2,2]}(x)\,\dd x.
\]

\begin{lemma}[Semicircle law for the comparison model]
\label{lem:model-semicircle}
Suppose that $n,d\to\infty$ through pairs satisfying the hypothesis
of Theorem~\ref{thm:main}.  If
$0<y_1\leq\cdots\leq y_d$ are the roots of $p_{\mathrm F}$, then
\begin{equation}\label{eq:model-semicircle}
 \frac1d\sum_{i=1}^d
 \delta_{(y_i-B)/\sqrt{Bd}}
 \Longrightarrow\ \mu_{\mathrm{sc}}.
\end{equation}
\end{lemma}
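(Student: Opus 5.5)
The strategy is to use the factorization $p_{\mathrm F}=p_1\boxtimes_d p_2$ from the proof of Lemma~\ref{lem:free-roots}. First we show that $p_1=q_{A,B}$ alone satisfies the semicircle law at scale $\sqrt{Bd}$ around $B$. Then we show that convolving with $p_2$, whose roots cluster near $1$, only perturbs the ordered roots by $o(\sqrt{Bd})$.

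\emph{Step 1: semicircle law for $q_{A,B}$.} The roots of $q_{A,B}$ are the eigenvalues of the transported Jacobi matrix with entries \eqref{eq:jacobi-diagonal}--\eqref{eq:jacobi-offdiagonal} for $(U,V)=(A,B)$. By \eqref{eq:jacobi-diagonal-difference}, each diagonal entry differs from $B$ by $O(d)=o(\sqrt{Bd})$, since $d/B\to0$. Next, put $x=k/d\in[0,1]$. In the off-diagonal entry \eqref{eq:jacobi-offdiagonal}, the factor $(k+\beta)(k+H)/((2k+H-1)(2k+H+1))$ equals $1+O(d/H)$, because $\beta=H+1-B$ and $H\asymp A\gg B\gg d$. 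The remaining factors $U/(2k+H)$ and $k+\alpha=k+B-1$ give $U/(2k+H)=1+O(d/A+B/A)$ and $k+\alpha=B(1+O(d/B))$. So the off-diagonal entry equals $\sqrt{kB}\,(1+o(1))=\sqrt{Bd}\,\sqrt{x}\,(1+o(1))$, uniformly in $k$. Subtract $B$, divide by $\sqrt{Bd}$, and compare with the tridiagonal matrix $\mathcal T_d$ having zero diagonal and off-diagonals $\sqrt{k/d}$. The difference has operator norm $o(1)$. The matrix $\mathcal T_d$ is exactly the rescaled Jacobi matrix of Hermite polynomials, and its empirical spectral measure converges to $\mu_{\mathrm{sc}}$. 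One can see this either from the classical Hermite zero asymptotics or by a moment computation on Dyck paths with weights $\sqrt{k/d}$, giving the Catalan moments in the limit. Weyl's inequality then transfers the limit to the roots $u_1\le\dots\le u_d$ of $p_1$: $\max_i|(u_i-B)/\sqrt{Bd}-\lambda_i(\mathcal T_d)|\to0$.

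\emph{Step 2: transfer through $p_2$.} By \eqref{eq:p2-sharp-location}, the roots of $p_2$ lie in $[1-\eta,1+\eta]$ with $\eta=4(d+\sqrt{(C-D)d})/D$. Lemma~\ref{lem:four-ratio-parameters} gives $C-D\asymp n/\mathcal L_n$ and $D\asymp n$, so $\eta B=O(d+\sqrt{nd/\log n})=o(\sqrt{Bd})$. Lemma~\ref{lem:free-ordered} gives $|y_i-u_i|\le\eta u_i\le 2\eta B$ for each $i$. Hence the ordered roots of $p_{\mathrm F}$, after centering at $B$ and dividing by $\sqrt{Bd}$, differ uniformly by $o(1)$ from those of $p_1$. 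Uniformly small ordered displacements preserve weak limits: test against bounded Lipschitz functions, which suffices for weak convergence. This gives \eqref{eq:model-semicircle}.

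\emph{Main obstacle.} The delicate point is keeping every error relative to the scale $\sqrt{Bd}$ rather than the cruder $O(\sqrt{Bd})$ bound of Gershgorin. Two places need care. The first is the uniform asymptotics of the off-diagonal entries of the $(A,B)$ Jacobi matrix. Here the factor $k+\alpha\approx B$ must dominate while $\beta$ and $H$ are of size $A$, and I would check the bound $1+O(d/B+B/A)$ directly. The second is the identification of the limiting Hermite-type matrix $\mathcal T_d$, for which I would cite the known semicircle limit of scaled Hermite zeros.
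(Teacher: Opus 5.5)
Your proposal is correct and follows the paper's proof essentially step for step. You compare the transported $(A,B)$ Jacobi matrix with the scaled Hermite Jacobi matrix (your $\mathcal T_d$, the paper's $\mathcal H_d/\sqrt d$) and apply Weyl's inequality; then you absorb $p_2$ through Lemma~\ref{lem:free-ordered} using $\eta B=o(\sqrt{Bd})$.

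One small bookkeeping slip does not affect the argument. With $\beta=H+1-B$, the factor $(k+\beta)(k+H)/((2k+H-1)(2k+H+1))$ is $1+O((B+d)/A)$, not $1+O(d/H)$, whereas $U/(2k+H)=1+O(d/A)$. So the $B/A$ error comes from the former factor. Your final bound $1+O(d/B+B/A)$ is the correct one.
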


\begin{proof}
The parameter asymptotics in
Lemmas~\ref{lem:Jacobi-refined} and
\ref{lem:four-ratio-parameters} give, with $G:=C-D$,
\begin{equation}\label{eq:semicircle-parameter-limits}
 \frac Bn\longrightarrow1,\qquad
 \frac Dn\longrightarrow\frac12,\qquad
 \frac BA\longrightarrow0,\qquad
 \frac GB\longrightarrow0.
\end{equation}
The wedge gives $d/B\to0$.

First consider the roots $u_1\leq\cdots\leq u_d$ of
$p_1=q_{A,B}$.  Let $\mathcal J_1$ be the Jacobi matrix with entries
\eqref{eq:jacobi-diagonal} and
\eqref{eq:jacobi-offdiagonal}, for $(U,V)=(A,B)$, and write its
diagonal and off-diagonal entries as $D_k^{(1)}$ and $E_k^{(1)}$.
The estimate proved before \eqref{eq:Jacobi-block-location} gives
\begin{equation}\label{eq:semicircle-diagonal}
 \max_{0\leq k<d}
 \frac{|D_k^{(1)}-B|}{\sqrt{Bd}}
 \leq4\sqrt{\frac dB}=o(1).
\end{equation}
If $H=A-d-1$, the exact off-diagonal formula gives, uniformly for
$1\leq k<d$,
\begin{align}
 \frac{(E_k^{(1)})^2}{Bk}
 &=\frac{A^2}{(H+2k)^2}\frac{B+k-1}{B}
   \frac{(H+k+1-B)(H+k)}
   {(H+2k-1)(H+2k+1)}\notag\\
 &=1+O\!\left(\frac dB+\frac BA\right)=1+o(1).
\label{eq:semicircle-offdiagonal}
\end{align}
Here $A/(H+2k)=1+O(d/A)$,
$(B+k-1)/B=1+O(d/B)$, and the final quotient on the first line is
$1+O(B/A+d/A)$, all uniformly in $k$.

Let $\mathcal H_d$ be the $d\times d$ tridiagonal matrix with zero
diagonal and off-diagonal entries $\sqrt{k}$ between rows $k-1$ and
$k$, $1\leq k<d$.  Equations
\eqref{eq:semicircle-diagonal} and
\eqref{eq:semicircle-offdiagonal} imply
\begin{equation}\label{eq:semicircle-matrix-approximation}
 \left\|
  \frac{\mathcal J_1-BI}{\sqrt{Bd}}
 -\frac{\mathcal H_d}{\sqrt d}
 \right\|_{\mathrm{op}}=o(1).
\end{equation}
We used here the elementary bound that the norm of a symmetric
tridiagonal matrix is at most the largest absolute diagonal entry
plus twice the largest absolute off-diagonal entry.
The characteristic polynomial of $\mathcal H_d$ is the monic
probabilists' Hermite polynomial.  Its normalized empirical spectral
measure tends to $\mu_{\mathrm{sc}}$.  For completeness, this follows
directly from traces: a nearest-neighbor walk count gives, for every
fixed $m\geq0$,
\[
 \lim_{d\to\infty}\frac1d
 \operatorname{tr}\left(\frac{\mathcal H_d}{\sqrt d}\right)^{2m}
 =\frac1{m+1}\binom{2m}{m},
 \qquad
 \operatorname{tr}(\mathcal H_d^{\,2m+1})=0.
\]
Indeed, away from the two boundary rows, each closed walk of length
$2m$ has $m$ upward and $m$ downward steps and contributes
$(k/d)^m+O_m(d^{-1})$ when it starts in row $k$; averaging over $k$
produces
$\binom{2m}{m}\int_0^1x^m\dd x$.  The matrices
$\mathcal H_d/\sqrt d$ have norm at most $2$, so the moment limits
give weak convergence.  Weyl's inequality and
\eqref{eq:semicircle-matrix-approximation} now show that
\begin{equation}\label{eq:p1-semicircle}
 \frac1d\sum_{i=1}^d
 \delta_{(u_i-B)/\sqrt{Bd}}
 \ \Longrightarrow\ \mu_{\mathrm{sc}}.
\end{equation}

It remains to check that the second finite-free factor is negligible
on this scale.  By \eqref{eq:p2-sharp-location} and
Lemma~\ref{lem:free-ordered},
\[
 |y_i-u_i|\leq\eta u_i,\qquad
 \eta=\frac{4(d+\sqrt{Gd})}{D}.
\]
Equation \eqref{eq:Jacobi-block-location} gives
$u_i\leq B+C_{\mathrm J}\sqrt{Bd}$.  Since $B/D$ stays bounded,
\begin{align*}
 \max_{1\leq i\leq d}\frac{|y_i-u_i|}{\sqrt{Bd}}
 &\ll
 \frac{d+\sqrt{Gd}}D
 \frac{B+\sqrt{Bd}}{\sqrt{Bd}}\\
 &\ll
 \left(\sqrt{\frac dB}+\sqrt{\frac GB}\right)
 \left(1+\sqrt{\frac dB}\right)=o(1)
\end{align*}
by \eqref{eq:semicircle-parameter-limits}.  Combining this uniform
pairing with \eqref{eq:p1-semicircle} proves
\eqref{eq:model-semicircle}.
\end{proof}

\begin{proof}[Proof of Theorem~\ref{thm:semicircle}]
Suppress the index $\nu$.  Let
$0<\widehat y_1\leq\cdots\leq\widehat y_d$ be the roots of
$P_{\mathrm F}$.  The proof of Theorem~\ref{thm:main} applies
Proposition~\ref{prop:abstract-stability} to
$p_{\mathrm F}$ and $P_{\mathrm F}$.  Consequently,
\eqref{eq:abstract-counting-transfer} gives
\begin{equation}\label{eq:Jensen-model-Kolmogorov}
 \sup_{t>0}\left|
 \frac1d\#\{i:\widehat y_i\leq t\}
 -\frac1d\#\{i:y_i\leq t\}
 \right|\leq\frac1d.
\end{equation}
The same bound holds after any one-to-one affine change of variable.

If $\rho_i$ is the Jensen zero corresponding to $\widehat y_i$,
then $S=BR_1$ and \eqref{eq:PF-def} give
\[
 \rho_i=-\frac{\widehat y_i}{BR_1},
 \qquad
 \sqrt{\frac nd}(1+R_1\rho_i)
 =-\sqrt{\frac nB}\,
   \frac{\widehat y_i-B}{\sqrt{Bd}}.
\]
Lemma~\ref{lem:model-semicircle},
\eqref{eq:Jensen-model-Kolmogorov}, and $B/n\to1$ show that the
empirical measure of the expressions on the left converges to the
reflection of $\mu_{\mathrm{sc}}$.  The semicircle law is symmetric,
so this reflection is $\mu_{\mathrm{sc}}$ itself.
\end{proof}

\section{Discussion}

The published theorem of Griffin, Ono, Rolen, Thorner, Tripp, and
Wagner gives hyperbolicity for $n\geq ce^d$; arXiv v3 records the
sharper threshold $n\geq ce^{d/2}$
\cite[Theorem~1.1]{GORTTW}.  Their comparison is Hermite.  Here the
comparison stays in the positive-root class: the positive error at
$R_3$ selects the first Jacobi deformation, and the negative error
at $R_4$ is absorbed by the second finite-free factor.

Theorem~\ref{thm:semicircle} improves the order of limits in the
global derivative-aspect GUE statement of \cite{GORZ}.  In their
notation $\delta(n)\sim(2n)^{-1/2}$, so the factor
$\sqrt{n/d}$ in Theorem~\ref{thm:semicircle} is the leading form of
$1/(\sqrt{2d}\,\delta(n))$; centering at $-1/R_1$ uses the exact
first coefficient ratio.  Their theorem first fixes $d$ and lets
$n\to\infty$, obtaining a Hermite polynomial, after which the
semicircle law is recovered as $d\to\infty$.  Here the two limits
are simultaneous throughout the polynomial wedge.  No claim is made
about local GUE correlations.

After $R_1,\ldots,R_4$ have been matched, the logarithmic multiplier
vanishes at five consecutive indices and has fifth derivative
$O(1/(n^4\log n))$.  The root localization and perturbed Jacobi
equation give the derivative scale
$r_{\mathrm F}\asymp\sqrt{nd}$.  Consequently the remaining error is
\[
 O\!\left(\frac{(\sqrt{nd})^5}{n^4\log n}\right)
 =O\!\left(\frac{d^{5/2}}{n^{3/2}\log n}\right),
\]
which yields the stated wedge.  The constants in the Jacobi
localization can be taken explicitly; the proof above uses
$C_{\mathrm J}=8$, and the localization itself requires only
$A\geq B+d$ rather than $A\geq64(B+d)$.  The later differential
recurrence is where the modest condition $A\geq8B$ is used.  No
attempt is made to optimize the eventual absolute constant.

This result controls an asymptotic region and does not provide a
converse route from partial Jensen hyperbolicity to the Riemann
hypothesis; see Farmer \cite{Farmer}.  O'Sullivan's modified
P\'olya--Jensen criterion gives a complementary use of Hermite
combinations \cite{OSullivan}.

\end{document}